\newtheorem{definition}{Definition}[section] 
\newtheorem{theorem}[definition]{Theorem} 
\newtheorem{lemma}[definition]{Lemma} 
\newtheorem{proposition}[definition]{Proposition} 
\newtheorem{corollary}[definition]{Corollary}
\definecolor{pasgre}{rgb}{0.67, 0.87, 0.67}
\definecolor{inchworm}{rgb}{0.7, 0.93, 0.36}
\definecolor{aqua}{rgb}{0.0, 1.0, 1.0}
\definecolor{myred}{RGB}{169, 50, 38}
\definecolor{myblue}{RGB}{41,128,185}
\definecolor{mygreen}{RGB}{169,223,191}
\definecolor{mygray}{RGB}{93,223,191}
\definecolor{myyellow}{RGB}{247,220,111}
\definecolor{mywhite}{RGB}{240,243,244}
\def\hat{\widehat}
\def\bra{\langle}
\def\cet{\rangle}
\def\linspan{\hbox{span}}
\def\zeta{\mathbb{Z}_+}
\DeclareMathOperator*{\argmin}{arg\,min}
\def \p {\partial}
\def \a {\alpha}
\def \ba {\begin {eqnarray*} }	
\def \ea {\end {eqnarray*} }
\def \beq {\begin {equation}}
\def \eeq {\end {equation}}
\def \mbeq {\begin {eqnarray}}
\def \meeq {\end {eqnarray}}
\def \bfo {\begin {displaymath} }
\def \efo {\end {displaymath} }
\def \beq {\begin {eqnarray}}
\def \eeq {\end {eqnarray}}
\def \ba {\begin {eqnarray*}}
\def \ea {\end {eqnarray*}}
\def\tilde{\widetilde}
\def\M{{M}}
\def\M{{\cM}}
\def \HL {\Lambda}
\def \Z {{\mathbb{Z}}}
\def \R {{\mathbb{R}}}
\newcommand\pef[1]{(\ref{#1})}
\def \H2s {H^{s+1}_0(\partial \M\times [0,T/2])}
\def \supp {\hbox{supp }}
\def \diam {\hbox{diam }}
\def \det {\hbox{det}}
\def\bra{\langle}
\def\cet{\rangle}
\def \ve {\varepsilon}
\def \a {\alpha}
\def \A {{\mathcal A}}
\def \F {{\mathcal F}}
\def \M {{M}}
\def \F {{\mathcal F }}
\def \pat {\partial _t}
\def \pa0 {\partial _0}
\def \p {\partial}
\def \1 {{\mathds{1}}}
\def \HL {\Lambda}
\def \tilde{\widetilde}
\newcommand{\sijoitus}[2]%
{\operatornamewithlimits{\Bigl/}_{\!\!\!#1}^{\,#2}}
\newcommand{\vc}[2]{\begin{pmatrix} #1 \\ #2 \end{pmatrix}}
\newcommand{\vol}[0]{\operatorname{Vol}}
\newcommand{\newt}[1]{#1}
\newcommand{\newtextttt}[1]{#1}
\def\newtexttt{}
\def\newtextt{}
\def\newtext {}
\begin{document}
\title[Artificial point sources]{Construction of artificial point sources for a linear wave equation in unknown medium}
\date{} 
\author[Kirpichnikova, Korpela, Lassas, Oksanen] {Anna Kirpichnikova, Jussi Korpela, Matti Lassas, and Lauri Oksanen}
\address{
Anna Kirpichnikova, University of Stirling;
Jussi Korpela, University of Helsinki;
Matti Lassas,  University of Helsinki;
Lauri Oksanen, UCL.}
%
%







\maketitle


{\bf Abstract:} 
{\it  
  We study the wave equation on a bounded domain of $\R^m$ and  on a compact Riemannian manifold $M$ with boundary.  We assume that the coefficients of the wave equation are unknown but that we are given the hyperbolic Neumann-to-Dirichlet map $\Lambda$  that corresponds  to the physical measurements on the boundary.  
  Using the knowledge of $\Lambda$ we  construct a sequence of Neumann boundary values so that at a time $T$ the corresponding waves converge to zero while the time derivative of the waves converge to a
 delta distribution. Such waves are called an artificial point source. The convergence of the wave takes place in the function spaces naturally
 related to the energy of the wave. We apply the results for inverse problems and demonstrate the focusing of the waves numerically in the 1-dimensional case.
 }


\noindent {\bf  Keywords:}
  Focusing of waves, Neumann-to-Dirichlet map, Inverse problems.

 
 \noindent 
{\bf AMS classification:} 
  35R30, 93B05

\section{Introduction}
We  consider the wave equation in $M$ that is a bounded domain of
$\R^m,\,m\ge 1,$ or a compact manifold. Let $u=u^f(x,t)$ be the solution of the wave equation
\begin{align}
\label{eq: Wave}
\begin{cases}
  \p_t^2 u(x,t)+\A u(x,t)=0,\quad \hbox{ in } M\times \R_+,\\
  u|_{t=0}=0,\quad \p_t u|_{t=0}=0,  \\
  \p_\nu u|_{\p  M \times \R_+}=f, 
\end{cases}
\end{align}
where $\A$ is a selfadjoint  second order elliptic differential operator of
the form $$\A=-\Delta_g+\sum_{j=1}^mV_j(x)\p_{x_j},$$ where $\Delta_g$ is the Laplace operator associated to a Riemannian metric $g$, see \eqref{A} for precise definition.
Moreover,   $f\in L^2(\p  M\times \R_+)$ is a Neumann boundary value that physically corresponds to a boundary source, $u=u^f$ is the unique solution wave corresponding to the boundary source $f,$  and $\nu$ is the interior pointing normal vector of the boundary $\p M.$ We assume that we are given the \emph{Neumann-to-Dirichlet map}, $\Lambda f =u^f|_{\p M\times\R_+}$. The map $\Lambda$ corresponds to the knowledge of  measurements made on the boundary of the domain and it models the response $u^f|_{\p M\times\R_+}$
of the medium to a  source $f$ put on the boundary of $M$.


We show that using $\Lambda$ we can find a sequence of Neumann boundary values  $f_i$ such that
 the wave and its time derivative at the large enough time $T$, that is, the pair $
(u^{f_i}(\,\cdotp,T), u^{f_i}_t(\,\cdotp,T))$ converge in the energy norm to 
$(0,\frac 1{\hbox{\tiny Vol}(V)}\mathds{1}_V)$, as $i\to \infty$.
Here, $V(x)$ is the indicator function a small neighborhood of a point $\hat x\in M$ and  $\hbox{Vol}(V)$  is the Riemannian volume of $V$  in $(M,g)$.
More precisely, $\hat x=\gamma_{\hat z,\nu}({{\widehat t}})$ is a point on the normal geodesics emanating
from a boundary point $\hat z$.
%
%
%
%
Furthermore, when the neighborhood $V$  converges to 
the point $\hat x$, the limits $(0,\frac 1{\hbox{\tiny Vol}(V)}\mathds{1}_V)$
%
 converges  in
suitable function space to $(0,\delta_{\widehat{x}})$,
where $\delta_{\widehat{x}}$ is the Dirac delta distribution. We call the waves $u^{f_i}$ that concentrate their energy in a small neighbourhood $V$ of a point inside the domain the  \emph{focusing waves}. When $V\to \{\hat x\}$,  the  waves $u^{f_i}(x,t)$ converge in the set $M\times (T,\infty)$
 converge to  $G(x,t; \hat x, T)$, where 
 Green's function $G(x,t; x_0, t_0)$ that is the solution of
  \begin{equation}\label{GreensWaveEq}
\begin{cases}
  \left(\partial^2_t-\mathcal A\right) G(x, t;x_0,t_0)=\delta_{x_0}(x)\delta_{t_0}(t)\quad \hbox{on }M\times \R\\
  G(\cdot, \cdot; x_0, t_0)|_{t<t_0}=0;\,\,\partial_\nu G(\cdot, \cdot; x_0, t_0)|_{\partial M\times \R}=0.
\end{cases}
\end{equation}
Roughly speaking, the waves $u^{f_i}$  in the set $M\times (T,\infty)$
converge to the wave that is produced at a point source located at $( \hat x, T)$.
Due to this, we say that when $V\to \{x\}$, the limit of the focusing waves produces an \emph{artificial point source} at time $t=T$.

We emphasize that the boundary sources $f_i$ that produce focusing waves can be determined without
knowing  the coefficients of the operator $\A$,
that is, when the medium in $M$ is unknown and it is enough only to know the map $\Lambda$ that corresponds
to measurements done on the boundary of the domain.
Our main resut is the following:

\begin{theorem}
\label{Thm 1} Let $T>\frac 12\diam(M)$ and
 $\widehat x={\gamma}_{\widehat z,\nu}({{{{\widehat t}}}})\in M$,  $\widehat z\in {\p M}$,
$0<{{{{\widehat t}}}}<T$. Let  $\tau_{\p M}(\hat z)$ be 
the critical distance along the normal geodesic $\gamma_{\hat z,\nu}$,
defined in \eqref{critical distance}.

Then $(\p M,g|_{\p M})$ and the Neumann-to-Dirichlet map $\Lambda$ determine  Neumann boundary values $f_n{{(\alpha,\beta}},k)$, $n,k\in \Z_+,$ $\a,\beta>0$,
such that the following is true:

If ${{{{{{\widehat t}}}}}}<\tau_{\p M}(\widehat z)$ then  
\beq
\label{eq: A main limit}
 \lim_{\alpha\to 0^+}\lim_{\beta\to 0^+}\lim_{n\to \infty}
\vc{  u^{f_n{{(\alpha,\beta}},k)}(\cdot,T)}{\p_t u^{f_n{{(\alpha,\beta}},k)}(\cdot,T)}  =
\vc{0}{\frac{1}{\hbox{\tiny Vol}(\Omega_k)}\mathds{1}_{\Omega_k}}\quad\hbox{in  $H_0^1(M)\times L^2(M)$},\hspace{-15mm}
\eeq 
 where $\Omega_k\subset M$ are neighborhoods of $\hat x$ satisfying $\lim_{k\to\infty}\Omega_k=\{\hat x\}$.
Moreover,
\beq
\label{eq: A0}
 \lim_{k\to \infty} \left( \lim_{\alpha\to 0^+}\lim_{\beta\to 0^+}\lim_{n\to \infty}
\vc{  u^{f_n{{(\alpha,\beta}},k)}(\cdot,T)}{\p_t u^{f_n{{(\alpha,\beta}},k)}(\cdot,T)} \right) =
\vc{0}{ \delta_{\widehat{x}}},
\eeq 
where the inner limits with respect to $n, \beta, \alpha$ are in $H_0^1(M)\times L^2(M)$ and the outer limit with respect to $k$ is in the space $H^{-s+1}(M)\times H^{-s}(M)$ with  $s>{\dim(M)}/{2}$.
In addition, for $t>T$
\beq
\label{eq: A0 Green}
 \lim_{k\to \infty} \left(\lim_{\alpha\to 0^+}\lim_{\beta\to 0^+}\lim_{n\to \infty}
 u^{f_n{{(\alpha,\beta}},k)}(\,\cdot,t)\right) =G(\,\cdot,t;\hat x,T)
\eeq 
where the inner limits with respect to $n, \beta, \alpha$ are in $H_0^1(M)$ and the outer limit with respect to $k$ is in the space $H^{-s+1}(M)$.
  \newt{

If  ${{{{{{\widehat t}}}}}}>\tau_{\p M}(\widehat z)$, then limits \eqref{eq: A main limit}, \eqref{eq: A0}
and \eqref{eq: A0 Green} are equal to zero. }
\end{theorem}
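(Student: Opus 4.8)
The plan is to run the boundary control method: reduce the construction of the focusing sources to a quadratic minimization whose coefficients are computable from $\Lambda$ alone, and then extract the claimed energy-norm and distributional limits by letting the regularization parameters and the neighborhood scale tend to their limits in the prescribed order $n\to\infty$, $\beta\to0^+$, $\alpha\to0^+$, $k\to\infty$. The guiding principle is that the sources $f_n(\alpha,\beta,k)$ should be produced by an optimization that only ever refers to boundary quantities, so that they are manifestly determined by $(\p M, g|_{\p M})$ and $\Lambda$.

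First I would record the computational backbone. By an (extended) Blagoveshchenskii identity, for Neumann sources $f,h\in L^2(\p M\times[0,T])$ all the space-time pairings of the waves at time $T$ — in particular $\langle u^f(\cdot,T),u^h(\cdot,T)\rangle_{L^2(M)}$, $\langle \p_t u^f(\cdot,T),\p_t u^h(\cdot,T)\rangle_{L^2(M)}$, and the mixed pairing $\langle \p_t u^f(\cdot,T),u^h(\cdot,T)\rangle_{L^2(M)}$ — are expressed as boundary integrals of $f,h$ and $\Lambda f,\Lambda h$, hence are known from $\Lambda$. From $(\p M,g|_{\p M})$ and $\Lambda$ one then recovers the boundary distance functions and the domains of influence $M(\Gamma,s)=\{x:\dist(x,\Gamma)\le s\}$, with controls supported on $\Gamma\times[T-s,T]$ filling exactly $M(\Gamma,s)$. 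Approximating $\mathds{1}_{M(\Gamma,s)}$ in $L^2(M)$ by waves $u^h(\cdot,T)$ reachable in the domain of influence and using the computable pairings above, one obtains $\langle \p_t u^f(\cdot,T),\mathds{1}_{M(\Gamma,s)}\rangle_{L^2(M)}$, and by taking differences the pairing with the characteristic function of a slice $\Omega_k=M(\Gamma_k,\widehat t+\varepsilon_k)\setminus\overline{M(\Gamma_k,\widehat t-\varepsilon_k)}$, where $\Gamma_k\downarrow\{\widehat z\}$ and $\varepsilon_k\downarrow0$. When $\widehat t<\tau_{\p M}(\widehat z)$ these slices are genuine neighborhoods of $\widehat x$ with $\vol(\Omega_k)>0$ and $\Omega_k\to\{\widehat x\}$.

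With these quantities available I would take $f_n(\alpha,\beta,k)$ to be the $n$-th element of a minimizing sequence for a Tikhonov-regularized control functional of the schematic form $\mathcal J(f)=\|u^f(\cdot,T)\|_{H_0^1(M)}^2+\|\p_t u^f(\cdot,T)-b_k\|_{L^2(M)}^2+(\text{regularization in }\alpha,\beta)$, with $b_k=\vol(\Omega_k)^{-1}\mathds{1}_{\Omega_k}$; expanding the second term, its cross term $\vol(\Omega_k)^{-1}\langle\p_t u^f(\cdot,T),\mathds{1}_{\Omega_k}\rangle$ and the constant $\|b_k\|^2=\vol(\Omega_k)^{-1}$ are exactly the boundary-computable quantities above, so the minimizers depend on $\Lambda$ only. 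Letting $n\to\infty$ yields the regularized minimizer for fixed $(\alpha,\beta,k)$; Tataru's unique continuation theorem gives approximate controllability, i.e.\ density of $\{(u^f(\cdot,T),\p_t u^f(\cdot,T))\}$ in $H_0^1(M)\times L^2(M)$ (on all of $M$, using $T>\tfrac12\diam(M)$ so that the full boundary fills $M$ by time $T$), whence as $\beta\to0^+$ and then $\alpha\to0^+$ the minimizers converge in the energy norm to $(0,b_k)$, which is \eqref{eq: A main limit}. Then $k\to\infty$: since $\Omega_k\to\{\widehat x\}$, the normalized indicators $b_k$ converge to $\delta_{\widehat x}$ in $H^{-s}(M)$ for $s>\dim(M)/2$ by Sobolev embedding, giving \eqref{eq: A0}. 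For $t>T$ the source is switched off and $u^{f}(\cdot,t)$ evolves freely from its Cauchy data at time $T$; continuity of the wave propagator from $H^{-s+1}(M)\times H^{-s}(M)$ into $H^{-s+1}(M)$ transports the convergence of the data to $(0,\delta_{\widehat x})$ into convergence of the wave to the solution with that data, namely $G(\cdot,t;\widehat x,T)$, which is \eqref{eq: A0 Green}.

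Finally, the case $\widehat t>\tau_{\p M}(\widehat z)$ is geometric: past the critical distance the normal geodesic $\gamma_{\widehat z,\nu}$ ceases to be distance-minimizing to $\p M$, so the boundary-distance slices $\Omega_k$ built from $(\widehat z,\widehat t)$ no longer isolate a neighborhood of $\widehat x$ and have vanishing normalized content, forcing all three limits to be zero. I expect the main obstacle to lie precisely in the two points that make the method work in the strong topology: showing that the pairing $\langle\p_t u^f(\cdot,T),\mathds{1}_{\Omega_k}\rangle$ with the indicator of a thin slice is genuinely recoverable from $\Lambda$ and stable enough to survive the normalization by $\vol(\Omega_k)$ as $k\to\infty$; and upgrading the mere density furnished by Tataru's theorem to energy-norm convergence of the constructed minimizers toward $(0,b_k)$, i.e.\ driving the position component to $0$ in $H_0^1(M)$ and the velocity component to the target in $L^2(M)$ simultaneously. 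Reconciling the ill-posedness of this control problem with the required strong convergence is exactly the role of the carefully nested limits in $n$, $\beta$, and $\alpha$.
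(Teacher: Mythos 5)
Your overall architecture — Blagovestchenskii identities to make the quadratic functionals boundary-computable, Tikhonov-regularized control toward the pair $(0,\vol(\Omega_k)^{-1}\mathds{1}_{\Omega_k})$, nested limits in $n,\beta,\alpha,k$, and propagation of the Cauchy data $(0,\delta_{\hat x})$ for $t>T$ — matches the paper's. But there is one concrete error that breaks the $k\to\infty$ step: your choice of slices $\Omega_k=M(\Gamma_k,\hat t+\varepsilon_k)\setminus\overline{M(\Gamma_k,\hat t-\varepsilon_k)}$ does \emph{not} shrink to the single point $\hat x$. As $\Gamma_k\downarrow\{\hat z\}$ and $\varepsilon_k\downarrow 0$, these sets converge to the metric sphere $\{x\in M:\ \texttt{d}(x,\hat z)=\hat t\}$, a codimension-one set, so the normalized indicators cannot converge to $\delta_{\hat x}$, and the dichotomy at the critical distance $\tau_{\p M}(\hat z)$ never appears (your slices look the same whether $\hat t$ is below or above $\tau_{\p M}(\hat z)$, so the claimed vanishing in the supercritical case does not follow either). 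The paper instead takes the difference of the domains of influence of the \emph{whole boundary} and of the cap, $\Omega_k=\bigl(M(\p M,\hat t-\tfrac1k)\cup M(\Gamma_k,\hat t)\bigr)\setminus M(\p M,\hat t-\tfrac1k)$; a point surviving all $k$ must be at distance at least $\hat t$ from all of $\p M$ and at most $\hat t$ from $\hat z$, which forces it to be $\gamma_{\hat z,\nu}(\hat t)$ when $\hat t<\tau_{\p M}(\hat z)$ and forces the intersection to be empty when $\hat t>\tau_{\p M}(\hat z)$ (this is exactly the cited Lemma 12 of [DKL]). Correspondingly the sources are differences $b=\tilde a-a$ of controls associated with the two sets $\tilde B(k)$ and $B(k)$ in \eqref{set1}--\eqref{set2}, not controls for a single thin annulus around $\Gamma_k$.

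A secondary point: you assert that the cross term $\langle\p_t u^f(\cdot,T),\mathds{1}_{\Omega_k}\rangle_{L^2(M)}$ is "exactly" boundary-computable, but the Blagovestchenskii identities only give pairings of waves with waves and with the constant function $1$; pairing with the indicator of a domain of influence requires first producing a wave $u^{Ph}(T)$ approximating that indicator and then pairing with it. This is precisely why the paper splits the construction into two minimizations: $\mathcal F_1$ (with parameter $\alpha$) produces $h_\alpha$ with $u^{Ph_\alpha}(T)\approx\mathds{1}_{\mathcal N}$ in $L^2$, and $\mathcal F_2$ (with parameter $\beta$) targets the \emph{computable} quantity $u^{Ph_\alpha}(T)$ rather than the indicator, so that every term in \eqref{eq: minimize 42} reduces to boundary data. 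Your single functional conflates the two regularizations and leaves this reduction implicit; it can be repaired, but the repair is exactly the two-stage scheme of Theorems \ref{min_theorem_h} and \ref{min_theorem_a}, with the energy identity $E(a,T)=-2\langle a,\hat P\p_t\Lambda a\rangle_V$ supplying the $H^1$-norm term on the boundary. With those two corrections your argument coincides with the paper's proof.
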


The boundary sources 
$f_n{{(\alpha,\beta}},k)$ in Theorem \ref{Thm 1}, that produce an artificial point source, are
obtained using an iterative sequence of measurements. In this iteration, we first
measure for $n=1$ the boundary value, $\Lambda f_1$, of the wave  that is produced by 
a certain boundary source $f_1$.
In  each iteration step, we use the boundary source $f_n$ and its response $\Lambda f_n$
to compute the boundary source $f_{n+1}
$ for the next   iteration step.  
The iteration algorithm in this paper was inspired by time reversal methods, see \cite{ Bal4, Bal2,Papa1,Papa3,Fink2006,FinkMain,Jon-DeHoop,MNW,Papa2,PTF}.  
We note that when the  traditional time-reversal algorithms are used in imaging, one typically needs to assume that
the medium contains some point-like scatterers.

 Generally, when the coefficients of the operator $\A$ are unknown, 
one can not specify the Euclidean coordinates of the point $\hat x$ to which the waves focus,
but only the Riemannian boundary normal coordinates $(\hat z,\hat t)$
(called also the ray coordinates in optics or the migration coordinates in Earth sciences) of $\hat x$ can be specified.
However, in the case when $M\subset \R^m$ and the operator $\mathcal A$ is of the form $\mathcal A=-c(x)^2\Delta$, we show in Corollary \ref{cor: focusing coordinates}
that the Euclidean coordinates of the point $\hat x$ can be computed using the
Neumann-to-Dirichlet map $\Lambda$.

%
%
%
%
%
%
%

The problem studied in the paper is motivated by recent advances in the applications  of optimal control methods to 
\emph{lithotripsy} and  \emph{hyperthermia}.
In lithotripsy, one breaks down a kidney or bladder   stone using a focusing ultrasonic wave.  Likewise, in \emph{hyperthermia} in medical treatments, cancer tissue is destroyed by {ultrasound  induced heating} that produces an excessive heat dose generated by a focusing wave \cite{MaHuttKaipio}.  Often, to apply these methods one needs to use other physical imaging modalities, for example X-rays tomography of MRI to estimate the material parameters in $M$.
However, for   the wave equation there are various methods to estimate material  parameters using boundary measurements of waves. These methods are, however, quite unstable   \cite{AKKLT,KatsudaKurylevLassas}. Therefore they might not be suitable  for hyperthermia, where safety is crucial. An important question is therefore how to focus waves in unknown media.


In the paper we 
advance further  the techniques  developed in \cite{BKLS} and \cite{DKL}.
In \cite{DKL}, a construction of focusing waves was considered in the analogous setting to this paper, but using 
the function space $L^2(M)\times L^2(M)$  instead of the natural energy space
$H^1_0(M)\times L^2(M)$ in \eqref{eq: A main limit}. The use of the function space associated to energy makes it possible to concentrate the energy of the wave near a single point.
For instance in the above ultrasound  induced heating problem,
the use correct energy norm is crucial as otherwise the energy of the wave may not
be concentrating at all. 

The other novelties of the paper are that in the case
of isotropic medium, that is, with the operator  $\mathcal A=-c(x)^2\Delta$ we can 
focus the wave near a point $\hat x$ which Euclidean coordinates can be computed (a posteriori).
We apply this to an inverse problem, that is, for determining the wave speed
in the unknown medium.

The methodology in this paper arises from boundary control methods used to study inverse problems in hyperbolic equations \cite{AKKLT,B1,BK,Kian,MR3924852,KKL,KKLM,LO} and on focusing of waves for non-linear equations
\cite{deHoop_et_al,Feiz,Hintz,Krupchyk,KLUinv,Lassas-ICM,WZ}. 
Similar problems have been studied using 
geometrical optics \cite{Rakesh1988,Rakesh1990,Rakesh2011,Stefanov2005} and the methods of scattering theory \cite{Caday},
see also the reviews of these methods in \cite{UhlmannICM,Uhlmann2004}.

In particular, Theorem \ref{Thm 1} provides for linear equations an analogous 
construction of the artificial point sources that is developed in  \cite{KLUinv}  for non-linear
hyperbolic problems with a time-dependent metric. We note that this technique is used as a surprising example on how
the inverse problems for non-linear equations are sometimes easier than for the corresponding problems for the 
linear equations. Thus  Theorem \ref{Thm 1}  shows that some tools that are developed for inverse problems for non-linear equations
can be generalized for linear equations.

The outline of this work is as follows.  In Section \ref{Terminology} we introduce notation, boundary control operators and review some relevant results from control theory. In Section \ref{sectionMIN} we state and describe the minimization problem for the boundary sources. In Section \ref{sectionFOCUS}, we discuss focusing of the waves and prove Theorem \ref{Thm 1}.  In Section \ref{sectionITERATION} we introduce the modified iteration time-reversal scheme to generate boundary sources using an iteration of simple operators and boundary measurements.  In Section  \ref{sec_computations} we present the results of the numerical experiment. In Section \ref{sectionDDF} we apply the results for  inverse problems.  Some of the proofs can be found in Appendices.

\section{Definitions}
\label{Terminology} \subsection{Manifold $M$} We assume that $M$ is closed $C^\infty$-smooth bounded set in $\R^m$ $(m\ge 1)$ with non-empty smooth boundary $\p M$ or an $m$-dimensional $C^\infty$-smooth compact manifold with boundary.  Furthermore, we  assume that $M$ is equipped with  a $C^\infty$-smooth Riemannian metric
$\texttt{g}=\sum_{j,k=1}^m \texttt{g}_{jk}(x) \,dx^j \otimes dx^k$. Elements of the inverse matrix of $\texttt{g}_{ij}$ are denoted by $\texttt{g}^{ij}$.
Let $\mathrm{dV}_\texttt{g}$ be the smooth measure
$
  \mathrm{dV}_{\texttt{g}} = \vert \texttt{g}(x)\vert^{1/2}dx^1
  \cdots  dx^m,
$
where $|\texttt{g}|=|\texttt{g}(x)|=\det([\texttt{g}_{jk}])$. Then  the inner product in $L^2(M)$ is defined by the inner product
\begin{align*}
  \bra u,v\cet_{L^2(M)} = \int_{M} u(x)v(x)\, \mathrm{dV}_\mu(x),
\end{align*}
where $\mathrm{dV}_\mu(x) = {\mu}(x) \mathrm{dV}_\texttt{g}(x)$ and
$\mu \in C^\infty(M)$ is a  strictly positive function on $M$.

We assume that $\A,$ introduced in \eqref{eq: Wave}, represents a general formally selfadjoint elliptic second order differential operator such that its  potential term vanishes (see \cite{KKL} for the details).  In the local coordinates, $\A$ can be represented in the form 
\begin{equation}\label{A}
\A v=- \sum_{j,k=1}^m \frac{1}{\mu(x) |\texttt{g}(x)|^{1/2}} \frac {\p}{\p
x^j}\left( \mu(x) |\texttt{g}(x)|^{1/2}\texttt{g}^{jk}(x)\frac {\p v}{\p x^k} \right).
\end{equation}
For example, if $\mu=1$  then $\A$ reduces to the  Riemannian Laplace operator.  

On the boundary $\p \M$, operator $\p_\nu$ is defined by
\ba
\p_\nu v=\sum_{j=1}^m \mu(x)\nu^j\frac {\p}{\p x^j}v(x)
\ea
where $\nu(x)=(\nu^1,\nu^2,\dots,\nu^m)$ is the interior unit normal
vector of the boundary satisfying $\sum_{j,k=1}^m \texttt{g}_{jk}(x)\nu^j\nu^k=1$.  {\newtext To
  integrate functions on $\p M$ we use the {\newtexttt measure}
  $\mathrm{dS}=\mu \mathrm{dS}_g$ on $\p M$ induced by $\mathrm{dV}_\texttt{g}$.  }
If $\Omega\subset\p M\times \mathbb{R}_+$, we denote
$
  L^2(\Omega)=\{f\in L^2(\p M\times\mathbb{R}_+) : \operatorname{supp}(f)\subset
\overline \Omega\},
$
identifying functions and their zero continuations.

%


\subsection{Travel time metric} \label{sec22}
Let $\texttt{d}(x,y)$ be the \emph{geodesic distance} corresponding to $\texttt{g}$.
The metric $\texttt{d}$ is also called the \emph{travel time metric} because
it describes how solutions of \newtextttt{the wave equation}
propagate. When $\Gamma\subset \p M$ is open, and $f\in L^2(\Gamma\times\mathbb{R}_+)$, then at time $t>0$, by finite velocity of wave propagation, solution $u^f(\,\cdotp,t)$ is supported in the \emph{domain of influence}  (see \cite{HormanderIV})
\begin{eqnarray*}
  M(\Gamma,t)=\{x\in M :\texttt{d}(x,\Gamma)\leq t\}.
\label{domain_of_influence}\end{eqnarray*}
{\newtext
The \emph{diameter} of $M$ is defined as
$
  \operatorname{diam}(M) = \max{\{\texttt{d}(x,y) : x,y\in M\}}.\label{diameter_definition}
$

\newt{
Let  $T_x M$ be the tangent space of $M,$ $x\in M$ and   $\xi\in T_x M$, $\|\xi\|_g=1$. We denote by $\gamma_{x,\xi}(s)$ the geodesic in $ M$, which is parameterized with its arclength and satisfies $\gamma_{x,\xi}(0)=x$  and $\dot\gamma_{x,\xi}(0)=\xi$. 
Suppose $z\in{\p M}$ and $\nu=\nu(z)$ is the interior unit normal
vector at $z\in \p M$. Then a geodesic $\gamma_{z,\nu}$ is called a 
\emph{normal geodesic}, and there is a critical value $\tau_{\p M}(z)>0$, such
that for $t<\tau_{\p M}(z)$ geodesic $\gamma_{z,\nu}([0,t])$ is
the unique shortest curve in $M$ that
connects $\gamma_{z,\nu}(t)$ to ${\p M}$,  and for
$t>\tau_{\p M}(z)$ this is no longer true. More precisely, we define the \emph{critical value} 
\beq\label{critical distance}
  \tau_{\p M}(z)=\sup \{ s>0 : \texttt{d}(\gamma_{z,\nu}(s),{\p M})=s\}.
\eeq
}

%
}

\subsection{Controllability for wave equation}
\label{Sec: Controllability}
Let us denote $u^f (T)=u^f (\,\cdotp,T)$.
The seminal Tataru's unique
continuation result \cite{Ta1} implies 
the following approximate controllability result:
%
%
%

\begin{proposition}[Tataru's approximate global controllability]\label{global controllability}
\newtextttt{
\label{lemma_Control 1} Let
 $T>2\, \operatorname{diam}(M)$. Then the linear subspace
$
  \{(u^f (T), u^f_t(T)) : \,f\in C_0^\infty ({\p M} \times (0,T))\}
$
is dense in $H^1_0(M)\times L^2(M)$. }
\end{proposition}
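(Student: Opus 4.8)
The plan is to establish density by a Hahn--Banach duality argument that reduces the assertion to a unique continuation statement which Tataru's theorem then supplies. Since $H^1_0(M)\times L^2(M)$ is a Hilbert space, the subspace in question is dense if and only if its annihilator in the dual is trivial. So I assume that a bounded linear functional $L$ on $H^1_0(M)\times L^2(M)$ vanishes on every pair $(u^f(T),u^f_t(T))$ with $f\in C_0^\infty(\partial M\times(0,T))$, and I must show $L=0$. It is convenient to represent $L$ through the antisymmetric (symplectic) pairing adapted to the wave equation, so that the Riesz representative of $L$ can be used as \emph{final data} at $t=T$ for an adjoint wave, the point being that this pairing interacts cleanly with integration by parts against the wave operator.

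Concretely, I would introduce the \emph{adjoint wave} $v$ solving $\partial_t^2 v + \mathcal A v = 0$ in $M\times(0,T)$ with the homogeneous Neumann condition $\partial_\nu v|_{\partial M\times(0,T)}=0$ and with final data $(v(\cdot,T),v_t(\cdot,T))$ in the appropriate dual space chosen to encode $L$. Because $\mathcal A$ is self-adjoint and time independent, $v$ is well defined and the problem is solvable from $t=T$. I then compute $L(u^f(T),u^f_t(T))$ by testing the two wave equations against each other over $M\times(0,T)$ and applying Green's formula. Integrating by parts twice in $t$, the bulk term cancels by self-adjointness of $\mathcal A$, the contribution at $t=0$ drops out because of the homogeneous initial data $u^f|_{t=0}=\partial_t u^f|_{t=0}=0$, and the contribution at $t=T$ reproduces exactly the symplectic pairing that defines $L$. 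What survives is a single lateral boundary term, and using $\partial_\nu u^f=f$ together with $\partial_\nu v=0$ to discard the term carrying the Neumann trace of $v$, I obtain an identity of the form
\[
 L\bigl(u^f(T),u^f_t(T)\bigr) = \int_0^T\!\!\int_{\partial M} f\, v\,\mathrm{dS}\,dt.
\]

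The annihilation hypothesis now says the right-hand side vanishes for all $f\in C_0^\infty(\partial M\times(0,T))$, which forces the Dirichlet trace $v|_{\partial M\times(0,T)}=0$. Combined with the built-in Neumann condition $\partial_\nu v|_{\partial M\times(0,T)}=0$, the adjoint wave $v$ has vanishing Cauchy data on $\partial M\times(0,T)$. At this point I invoke Tataru's unique continuation theorem: a solution of the wave equation with time-independent coefficients and zero Cauchy data on $\partial M\times(0,T)$ vanishes in the double domain of influence $\{(x,t): \texttt{d}(x,\partial M) < \min(t,T-t)\}$. Evaluating at the slice $t=T/2$, a point $x$ is covered whenever $\texttt{d}(x,\partial M)<T/2$; since $\texttt{d}(x,\partial M)\le \operatorname{diam}(M)$ for every $x$ and $T/2>\operatorname{diam}(M)$ by hypothesis, I get $v(\cdot,T/2)=0$, and since the vanishing set is open also $\partial_t v(\cdot,T/2)=0$. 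Finite-energy uniqueness for the wave equation then propagates this to $v\equiv 0$ on $M\times(0,T)$; in particular its final data at $t=T$, and hence $L$, vanishes.

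I expect the principal obstacle to be the rigorous justification of the Green's identity and the boundary integral rather than the geometric step (which is handed to us by Tataru). The representative of a functional on $H^1_0(M)\times L^2(M)$ naturally lives in $H^{-1}(M)\times L^2(M)$, one derivative below finite energy, so $v$ is a solution of reduced regularity and one must verify, via the \emph{hidden regularity} (transposition) of boundary traces of weak wave solutions, that $v|_{\partial M\times(0,T)}$ is a well-defined $L^2$ function and that the integration by parts is legitimate. For $f\in C_0^\infty$ the primal solution $u^f$ is smooth enough that these manipulations are justified, and one concludes by density together with the continuity of the map $f\mapsto(u^f(T),u^f_t(T))$ into $H^1_0(M)\times L^2(M)$.
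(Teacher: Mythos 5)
Your proof is correct: the Hahn--Banach/duality reduction to vanishing Cauchy data for an adjoint wave with final data in $H^{-1}(M)\times L^2(M)$, followed by Tataru's unique continuation in the double cone of influence (using $T/2>\operatorname{diam}(M)\ge \texttt{d}(x,\p M)$ at the slice $t=T/2$) and backward/forward uniqueness, is precisely the standard argument. The paper does not prove this proposition itself but delegates it to \cite[Thm.~4.28]{KKL}, whose proof proceeds in essentially the same way, so your approach matches the one the paper relies on, and the technical caveats you flag (hidden regularity of the Dirichlet trace of the rough adjoint solution) are exactly the points handled there.
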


The proof of Proposition \ref{global controllability} is given in \cite[Thm. 4.28]{KKL}.

 Tataru's unique
continuation result implies also the
following local controllability result. The \emph{indicator function} of a set $S$ is denoted by $\mathds{1}_S$.

\begin{proposition}[Tataru's approximative local controllability]
\newtextttt{
\label{lemma_Control_2} Let $T>0$,  let $\Gamma_1, \ldots, \Gamma_J \subset \p M$ be non-empty open sets, and let $0<s_k\le T$ for $k=1,\ldots, J$. Suppose 
\begin{eqnarray}
\label{B-set}
  \mathcal B=\bigcup^{J}_{j=1} \Gamma_j\times (T-s_j,T),\quad\mathcal N=\bigcup^{J}_{j=1}M(\Gamma_j,s_j),
\end{eqnarray}
{\newtexttt
and $P$ is multiplication by the indicator function $\mathds{1}_{\mathcal B}$,
}\label{dense1}
\begin{eqnarray}
\label{B-projection} 
 P\colon L^2({\p M} \times (0,2T)) \rightarrow L^2({\p M} \times(0,2T)), \quad
(P f)(x,t)= \mathds{1}_{\mathcal B}(x,t)\,f(x,t).\hspace{-1cm}
\end{eqnarray}
Then the linear subspace
$
  \left\{u^{Ph}(T) : h \in L^2 ({\p M}\times (0,2T) ) \right\}
$
is dense in $L^2(\mathcal N).$} 
\end{proposition}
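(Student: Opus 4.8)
The plan is to establish the two inclusions separately: that every element $u^{Ph}(T)$ lies in $L^2(\mathcal N)$, and that the orthogonal complement of the family $\{u^{Ph}(T)\}$ inside $L^2(\mathcal N)$ is trivial. The inclusion $u^{Ph}(T)\in L^2(\mathcal N)$ is a direct consequence of finite speed of propagation: since $Ph$ is supported in $\mathcal B=\bigcup_j\Gamma_j\times(T-s_j,T)$, a source active on $\Gamma_j$ only after time $T-s_j$ produces at time $T$ a wave supported within geodesic distance $s_j$ of $\Gamma_j$, that is, in $M(\Gamma_j,s_j)$; the union over $j$ gives $\operatorname{supp}u^{Ph}(T)\subset\mathcal N$. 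Because $P$ is multiplication by $\mathds{1}_{\mathcal B}$, its range is exactly $L^2(\mathcal B)$, so as $h$ runs over $L^2(\partial M\times(0,2T))$ the source $Ph$ runs over all of $L^2(\mathcal B)$, and it suffices to argue with $f\in L^2(\mathcal B)$ in place of $Ph$.

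For the density I would take $\psi\in L^2(\mathcal N)$ orthogonal to every $u^f(T)$, $f\in L^2(\mathcal B)$, and show $\psi=0$. The mechanism is a duality identity produced by an adjoint wave. Let $\phi$ solve $\partial_t^2\phi+\mathcal A\phi=0$ on $M\times(0,T)$ with $\partial_\nu\phi|_{\partial M}=0$ and final data $\phi(\cdot,T)=0$, $\partial_t\phi(\cdot,T)=\psi$. Since $(0,\psi)$ lies in the energy space $H^1_0(M)\times L^2(M)$, $\phi$ is a finite-energy solution whose Dirichlet trace $\phi|_{\partial M}$ lies in $L^2(\partial M\times(0,T))$ by the standard trace regularity. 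Integrating $\int_0^T\!\!\int_M(\partial_t^2 u^f+\mathcal A u^f)\,\phi\,\mathrm{dV}_\mu\,dt=0$ by parts in $t$ and applying Green's formula for $\mathcal A$, and inserting the vanishing initial data of $u^f$ together with the equation and Neumann condition for $\phi$, yields the identity
\[
  \langle u^f(T),\psi\rangle_{L^2(M)}=-\langle f,\phi|_{\partial M}\rangle_{L^2(\partial M\times(0,T))},
\]
whose precise sign (fixed by the interior-normal convention) is immaterial here. Orthogonality of $\psi$ to all $u^f(T)$ with $f\in L^2(\mathcal B)$ then forces $\phi|_{\partial M}=0$ on $\mathcal B$, so $\phi$ has vanishing Cauchy data on $\bigcup_j\Gamma_j\times(T-s_j,T)$.

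The concluding step is unique continuation, and here the one-sided interval $(T-s_j,T)$ is the crux: Tataru's theorem applied to it determines $\phi$ only inside a cone that never reaches the final time $t=T$. To repair this I would extend $\phi$ by odd reflection about $t=T$, setting $\tilde\phi(x,t)=-\phi(x,2T-t)$ for $t\in(T,2T)$. As $\phi(\cdot,T)=0$, both $\tilde\phi$ and $\partial_t\tilde\phi$ match continuously across $t=T$, so $\tilde\phi$ is a finite-energy solution of the wave equation on $M\times(0,2T)$ with $\partial_\nu\tilde\phi=0$ on $\partial M\times(0,2T)$ and vanishing Dirichlet trace on the now symmetric set $\bigcup_j\Gamma_j\times(T-s_j,T+s_j)$. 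Tataru's unique continuation theorem \cite{Ta1} (cf. \cite{KKL}) then gives $\tilde\phi=0$ on the double cone $\bigcup_j\{(x,t):\texttt{d}(x,\Gamma_j)<s_j-|t-T|\}$, an open set containing a full spacetime neighborhood of each point $(x_0,T)$ with $\texttt{d}(x_0,\Gamma_j)<s_j$. Hence all derivatives of $\tilde\phi$ vanish there, and in particular $\psi(x_0)=\partial_t\tilde\phi(x_0,T)=0$ whenever $\texttt{d}(x_0,\Gamma_j)<s_j$ for some $j$. Thus $\psi$ vanishes on $\bigcup_j\{x:\texttt{d}(x,\Gamma_j)<s_j\}$, which differs from $\mathcal N$ only by the distance level sets $\{\texttt{d}(\cdot,\Gamma_j)=s_j\}$; these are null sets since $\texttt{d}(\cdot,\Gamma_j)$ is Lipschitz with $|\nabla\texttt{d}|=1$ a.e., so $\psi=0$ in $L^2(\mathcal N)$ and the family is dense.

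The step I expect to be the main obstacle is exactly this last one. The asymmetry of $\mathcal B$ in time means a naive use of Tataru's theorem yields nothing at $t=T$, and the reflection trick, which rests entirely on the final condition $\phi(\cdot,T)=0$, is what lets the unique continuation reach the final time. Verifying carefully that the odd extension is a genuine weak solution across $t=T$, together with tracking the trace regularity of $\phi|_{\partial M}$ needed to make the duality identity rigorous, is the technical heart of the proof.
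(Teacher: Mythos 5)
Your argument is correct, but note that the paper does not actually prove this proposition: it is quoted from the literature, with the sentence following the statement citing \cite[Thm.~3.10]{KKL} (the \verb|\ref| there points to the global-controllability proposition, evidently a typo). What you have written is essentially the standard proof behind that cited theorem: duality against the adjoint Neumann problem with final data $(0,\psi)$, followed by Tataru's unique continuation in the double cone $\{(x,t): \texttt{d}(x,\Gamma_j)+|t-T|<s_j\}$. Your odd-reflection device is exactly the accepted way to handle the one-sidedness of the interval $(T-s_j,T)$; an equivalent and slightly cleaner packaging is to define the adjoint wave directly on a two-sided time interval as the solution of the Cauchy problem with data $(0,\psi)$ at $t=T$, and observe that it is automatically odd about $t=T$ because $t\mapsto -\phi(x,2T-t)$ solves the same Cauchy problem; uniqueness then gives the vanishing of the Dirichlet trace on the symmetrized set $\Gamma_j\times(T-s_j,T+s_j)$ without having to verify by hand that the glued function is a weak solution. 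Two small points worth tightening: (a) to avoid the trace-regularity issue you flag for rough sources, it suffices to establish the duality identity for $f\in C_0^\infty(\mathcal B)$, which is enough to conclude $\phi|_{\partial M}=0$ a.e.\ on $\mathcal B$ since such $f$ are dense in $L^2(\mathcal B)$; and (b) your null-set argument for the level set $\{\texttt{d}(\cdot,\Gamma_j)=s_j\}$ is sound (a Lipschitz function constant on a set of positive measure has vanishing gradient a.e.\ there, contradicting the a.e.\ eikonal equation), but it deserves a sentence rather than a clause. With those adjustments your proof is a complete, self-contained substitute for the citation.
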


Proposition \ref{lemma_Control 1} follows directly from
\cite[Thm. 3.10] {KKL}.

\subsection{Auxiliary operators}\label{sec_boundary_ops}  In this section we introduce several operators to manipulate boundary sources.  

Let $h\in L^2\left(\p M\times (0,2T)\right)$ be the Neumann boundary value (a source function). Then 
by \cite[Thm. A]{Lasiecka2}), the initial-boundary value problem (\ref{eq: Wave}) has a unique solution $u^h$ and we define a map 
\begin{align}
\label{map_for_h}
& U:L^2\left(\p M\times (0,2T)\right)\to
C\left([0,2T];H^{3/5-\varepsilon}(M)\right),\quad
 U:h \mapsto u^h,
\end{align}
where $\varepsilon>0$.
We define also the space
\ba
H_0^1 \left((0,T);L^2(\p M)\right)=\{
f\,|\,[0,T]\times \p M\to \R&:&\ 
f,\p_t f\in L^2([0,T]\times \p M),\\
& &
f(x,t)|_{t=0}=0,\  f(x,t)|_{t=T}=0
\}.
\ea

Let $a\in H_0^1\left((0,T);L^2(\p M)\right)$ be another Neumann boundary value, then solution of the initial-boundary value problem (\ref{eq: Wave}) defines 
a bounded map
\begin{align}
\label{map_for_a}
U:H_0^1\left((0,T);L^2(\p M)\right)\to
C\left([0,2T];H^{3/2}(M)\right),\quad 
U:a \mapsto u^a,
\end{align}
see \cite[Thm. 3.1(iii)]{Lasiecka2}.
\subsubsection{Sobolev spaces on the boundary}  Let us introduce Sobolev spaces \ba
V=L^2\left(\p M\times(0,2T)\right),\quad  Y=H^1_0\left((0,T);L^2(\p M)\right),\quad  Z=H^1_0\left((0,2T);L^2(\p M)\right),\ea 
%
while the inner product in $Z$ is given by 
$
\bra a_1, a_2\cet_{Z}=\bra a_1, a_2\cet_V+\bra \p_t a_1, \p_t a_2\cet_V .
$

\subsubsection{Neumann-to-Dirichlet map} 
For $h\in V$, the boundedness of the map  (\ref{map_for_h}) implies that the trace of solution
satisfies $u^h |_{\p M \times(0,2T)}\in C\left([0,2T];H^{3/5-1/2-\varepsilon}(\p M)\right)$, where $\varepsilon>0.$ Hence the Neumann-to-Dirichlet map
\begin{align}
\label{Lam-operator}
 & \Lambda\colon V\to V, \quad\Lambda h =  u^h |_{\p M \times(0,2T)}
\end{align}
is a bounded linear operator, where $u^h$ is the solution of \eqref{eq: Wave}.

\subsubsection{Time-reversal map and time filter map}
Let  
\begin{eqnarray*}
 R:V\to V,\quad Rf(x,t)=f(x,2T-t), 
 \end{eqnarray*} be the \emph{time reversal map} and 
\begin{eqnarray}
\label{J-operator}
J:V\to V,\quad Jf(x,t)= \frac 12\int_{[0,2T]} \mathds{1}_{\mathcal L}(s,t)f(x,s)ds,
\end{eqnarray}
be the \emph{time filter map}, where 
\begin{eqnarray}\label{shapeL}
  \mathcal L&=&\{ (s,t)\in \R_+\times\R_+:\ t+s\leq 2T,\ s>t \}.
\end{eqnarray}
The adjoint  $\Lambda^*\colon V\to V$, of the Neumann to Dirichlet map 
$\Lambda\colon V\to V$, is 
$ \Lambda^*=R\Lambda R,$
see \cite[eq. 21]{BKLS}.

\subsection{Blagovestchenskii identities}\label{secBlag} The inner product of solutions of (\ref{eq: Wave}) at time $T$, i.e. waves $u^f(\,\cdotp,T)$ and  $u^h(\,\cdotp,T),$ generated by two boundary sources $f, h$ can be calculated from boundary measurements on $\p M$ using the identity below.    For $f,h\in V$ the \emph{first Blagovestchenskii identity} states that
\begin{equation}
\label{blago1}
\int_{M} u^f(T)u^h(T)\,\hbox{dV}_\mu =\int_{\p M \times [0,2T]}
(Kf)(x,t) h(x,t)\, \hbox{dS}_g(x) dt, 
\end{equation}
where $ \hbox{dS}_g$ is the Riemannian volume on $\p M$, and $K$  is defined
in terms of the Neumann-to-Dirichlet map $\Lambda$ and simple operators on boundary as \begin{eqnarray}
\label{K-operator}
K\colon V\to V,\quad K=R\Lambda R J-J\Lambda,
\end{eqnarray}
see \cite[eq. 23]{BKLS}. 
The \emph{second Blagovestchenskii identity} is \begin{align}
\label{Tformula2}
 \bra u^{h}(T),1 \cet_{L^2(M)}=-\bra h,\Phi _T \cet_V,
\end{align}
where 
 $\Phi _T:\left({\p M} \times (0,2T)\right)\to \R$ is the function
\beq
\label{ihk}
\Phi _T(x,t)=(T-t)_+=\begin{cases}
T-t,\quad\quad t\le T\\
0,\quad\quad\quad t>T.
\end{cases}\eeq
The proofs for formulas (\ref{blago1}) and  (\ref{Tformula2}) can be found \cite[Lemma 1]{BKLS} and \cite{DKL}, see also the 
Appendices \ref{appA1} and \ref{appA2}.




\subsubsection{Projection Operators}  We use frequently the projection operator $P=P_{\mathcal B}$ introduced in (\ref{B-projection}). We define also an orthogonal projection in $Z$ (a {\it support shrinking projector})
\begin{eqnarray}
\label{P0-projection}
{N_Y}:  Z\rightarrow  Z,\qquad \operatorname{Ran}({N_Y}) =  Y\subset  Z.
\end{eqnarray}
Note that ${N_Y}$ can be written also
\begin{equation}\nonumber\label{eq:hypsine}
{N_Y} f \,=\, \underset{u\in H_0^1 \left((0,T);L^2(\Gamma)\right)}{\argmin} \|f-u\|^2_{H_0^1 \left((0,2T);L^2(\Gamma)\right)},
\end{equation}
and it is given by 
$
{N_Y} f=f(x,t)-\frac{\sinh(t)}{\sinh(T)}f(x,T).
$  
Additionally, we introduce a projection
  \begin{eqnarray}
\label{T-projection}
 \hat P\colon V\to V,\quad
  (\hat P f)(x,t)=\mathds{1}_{{\p M}\times (0,T)}(x,t)\,f(x,t).
\end{eqnarray}
\subsubsection{Green's operator on the boundary}  
Let
\begin{align}
\label{Q_oper}
Q: V \to Z,\qquad
Qf(x,t)=\int_0^{2T} g(t,s)f(x,s)ds,
\end{align} 
where $g\colon (0,2T)^2 \to \R$,
$$
g(t,s)=\frac{1}{2(e^{4T}-1)}
  \begin{cases}
    (e^{t}-e^{-t})(e^{4T}e^{-s}-e^{s}),\quad t<s,\\
    (e^{s}-e^{-s})(e^{4T}e^{-t}-e^t),\quad t>s,
  \end{cases}\, \,
$$
is the Green's function for the problem
$$
  \begin{cases}
    (1-\p_t^2)g(t,s)=\delta(t-s),\quad t\in (0,2T)\\
    g|_{t=0}=0,\quad  g|_{t=2T}=0, \quad 
  \end{cases}
$$
where $s\in (0,2T)$. Note that $Q\colon V\to Z$ is bounded.

\section{Minimisation Problems}
\label{sectionMIN}
 Let
$P$  be the projector   given in \eqref{B-projection} associated to the sets $\mathcal B$ and $\mathcal N$ given in
\eqref{B-set}. We will consider two minimization problems.
The first one is considered to find $h\in V$ such that
$u^{Ph}(T)$ is close to the indicator function $\mathds{1}_{\mathcal N}$ in $L^2(M)$.
The second minimization problem is considered to find $a\in Y$ such that the time derivative
$u^{a}_t(T)$ is close to $u^{Ph}(T)$ and therefore close to $\mathds{1}_{\mathcal N}$ in $L^2(M)$
and that value of the wave $u^{a}(T)$ is close to zero in $H^1_0(M)$.

%
%
%
%
To consider the first minimization problem,
 we define for $\alpha\in(0,1)$  the quadratic form $h\mapsto \mathcal F_1(h,\a)$, 
\beq
\label{possupahka1}
\mathcal F_1(h;\a) = \|\mathds{1}_{\mathcal N}-u^{Ph}(T)\|^2_{L^2(M)} +\, \alpha \|h\|^2_V,
\quad h\in V
\eeq
Then, we define $h_\alpha\in V$ be the minimizer 
 \beq\label{minimizer a}
h_\alpha=\underset{h\in V}{\argmin}\,\mathcal F_1(h;\alpha).
\eeq

To consider the second minimization problem, for
 $\beta\in(0,1)$, $h\in V$  we define 
\begin{align}
\label{possupahka2}
\mathcal F_2(a; \beta,h) &=&\|u_t^{a}(T)-u^{Ph} (T) \|^2_{L^2(M)}  + \|u^{a}(T)\|^2_{H^1(M)} 
+\, \beta\|a\|^2_{Y},\quad a\in Y.
\end{align}
We minimize this functional with respect to $a$ when $h=h_\alpha$,
and define
 \beq\label{minimizer ab}
 a{{(\alpha,\beta}})=\underset{a\in Y}{\argmin}\,\mathcal F_2(a;\beta, h_\a).
 \eeq




Using the Blagovestchenskii identities (\ref{blago1}) and  (\ref{Tformula2}) we rewrite $\mathcal F_1(h,\a)$ and $\mathcal F_2(a,\beta|h)$ in terms 
 that, up to a constant term, can be computed on the boundary, 
 \begin{equation}\label{eq: minimize 41}
\mathcal F_1(h;\a)=\bra \mathds{1}_{\mathcal N},\mathds{1}_{\mathcal N}\cet_{L^2(M)}+2\bra Ph,\Phi _T\cet_V + \bra Ph,KPh\cet_V +\alpha \bra h,h \cet_V.
\end{equation}
\begin{eqnarray}\label{eq: minimize 42}
\mathcal F_2(a;\beta,h)&=& \bra Ph,KPh\cet_V-2\bra Ph,K\p_t a\cet_V+\bra\p_t a,K\p_ta\cet_V \\\nonumber
  &&-\quad 2\bra a,\hat P\p_t\Lambda a \cet_V- \bra \p_t a,K\p_t a\cet_V+\bra a,Ka\cet_V.
   \end{eqnarray}
Next we consider how $h_\alpha$ and $a{{(\alpha,\beta}})$ can be found using
the map $\Lambda$.

%
 %

\begin{theorem}\label{min_theorem_h} For $\a\in(0,1),$ the solution of
the equation
\begin{equation}\label{eq1}
\left(
PKP+\a
\right)h=-P\Phi_T
\end{equation}
is the unique minimizer $h_\alpha\in V$ of $\mathcal F_1(h;\a)$ in the space $h\in V$,  see (\ref{possupahka1}).  Furthermore, map $PKP\colon V\to V$ is non-negative, bounded, and selfadjoint. Moreover $\|h_\a\|^2_V\le \frac{1}{\alpha}(1+T)^2.$
\end{theorem}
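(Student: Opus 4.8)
The plan is to recognize $\mathcal F_1(\,\cdot\,;\alpha)$ as a strictly convex coercive quadratic form on $V$ and to identify \eqref{eq1} as its Euler--Lagrange equation. Since $P$ is multiplication by the real indicator $\mathds 1_{\mathcal B}$, it is an orthogonal projection, so $P=P^*=P^2$ and I can move both copies of $P$ onto $h$ in the boundary representation \eqref{eq: minimize 41}, rewriting
\[
\mathcal F_1(h;\alpha)=\langle (PKP+\alpha)h,\,h\rangle_V+2\langle P\Phi_T,\,h\rangle_V+\|\mathds 1_{\mathcal N}\|_{L^2(M)}^2 .
\]
Writing $A_0:=PKP$, the first task is to establish the three asserted properties of $A_0$.

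Boundedness of $A_0$ is immediate once $K$ is known to be bounded on $V$, which follows from \eqref{K-operator} and the boundedness of $\Lambda$, $R$ and $J$, together with $\|P\|\le 1$. For non-negativity and self-adjointness the clean route is \emph{not} the operator formula \eqref{K-operator} — from which self-adjointness is invisible, since $\Lambda^*=R\Lambda R\neq\Lambda$ — but the first Blagovestchenskii identity \eqref{blago1}. Indeed, \eqref{blago1} reads $\langle Kf,h\rangle_V=\int_M u^f(T)u^h(T)\,\mathrm{dV}_\mu$, a bilinear form manifestly symmetric in $(f,h)$ which for $h=f$ equals $\|u^f(T)\|_{L^2(M)}^2\ge 0$. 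Hence $K=K^*$ and $K\ge 0$, so $A_0=PKP$ satisfies $A_0^*=P^*K^*P^*=PKP=A_0$ and $\langle A_0 h,h\rangle_V=\|u^{Ph}(T)\|_{L^2(M)}^2\ge 0$. This is the step I expect to be the only genuinely delicate point: the symmetry and positivity are not readable off the operator formula and must be extracted from the energy identity \eqref{blago1}.

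With $A_0$ bounded, self-adjoint and non-negative, the operator $A_0+\alpha$ is bounded, self-adjoint and satisfies $A_0+\alpha\ge\alpha I>0$, hence is boundedly invertible (Lax--Milgram, or the spectral theorem). Completing the square,
\[
\mathcal F_1(h;\alpha)=\langle (A_0+\alpha)(h-h_\alpha),\,h-h_\alpha\rangle_V+\text{const},\qquad h_\alpha:=-(A_0+\alpha)^{-1}P\Phi_T,
\]
and $\langle (A_0+\alpha)(h-h_\alpha),h-h_\alpha\rangle_V\ge\alpha\|h-h_\alpha\|_V^2\ge 0$ with equality iff $h=h_\alpha$. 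This shows simultaneously that $h_\alpha$ is the unique minimizer and that it is the unique solution of $(PKP+\alpha)h=-P\Phi_T$, i.e.\ \eqref{eq1}. (Equivalently, the first variation $\tfrac12 D\mathcal F_1(h;\alpha)[v]=\langle (A_0+\alpha)h+P\Phi_T,\,v\rangle_V$ vanishes for all $v\in V$ exactly when \eqref{eq1} holds, and strict convexity makes this critical point the global minimum.)

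Finally, the a priori bound $\|h_\alpha\|_V^2\le\frac1\alpha(1+T)^2$ I would get from the variational comparison rather than from the operator norm of $(A_0+\alpha)^{-1}$; the latter only yields $\|h_\alpha\|_V\le\alpha^{-1}\|P\Phi_T\|_V$, with the wrong power of $\alpha$. Instead, since $P0=0$ and hence $u^{P0}\equiv 0$, one has $\mathcal F_1(0;\alpha)=\|\mathds 1_{\mathcal N}\|_{L^2(M)}^2=\mathrm{Vol}_\mu(\mathcal N)$, and minimality gives
\[
\alpha\|h_\alpha\|_V^2\le\|\mathds 1_{\mathcal N}-u^{Ph_\alpha}(T)\|_{L^2(M)}^2+\alpha\|h_\alpha\|_V^2=\mathcal F_1(h_\alpha;\alpha)\le\mathcal F_1(0;\alpha)=\mathrm{Vol}_\mu(\mathcal N).
\]
The stated constant then follows by bounding the target volume $\mathrm{Vol}_\mu(\mathcal N)\le(1+T)^2$, using that $\mathcal N$ is a union of domains of influence of depth at most $T$ and that $T>\tfrac12\operatorname{diam}(M)$. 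The remaining work — tracking the boundedness constants for $\Lambda,R,J$ and confirming the completion-of-the-square algebra — is entirely routine.
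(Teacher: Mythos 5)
Your argument for the core of the theorem coincides with the paper's. The paper likewise gets boundedness of $PKP$ from boundedness of $K$ and $P$, takes non-negativity and self-adjointness of $PKP$ from the symmetry and positivity of the bilinear form $\langle Kf,h\rangle_V=\langle u^f(T),u^h(T)\rangle_{L^2(M)}$ in \eqref{blago1} (exactly as you say, this is not readable from the operator formula \eqref{K-operator}), concludes strict convexity, and identifies \eqref{eq1} as the vanishing of the Fr\'echet derivative $D\mathcal F_1(\cdot;\alpha)|_h\eta=\langle\eta,(PKP+\alpha)h+P\Phi_T\rangle_V$; your completion of the square is the same computation in different clothing.

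Where you genuinely depart from the paper is the a priori bound, and there your mechanism is sound but the last step has a gap. The paper supports $\|h_\alpha\|_V^2\le\frac1\alpha(1+T)^2$ only by the remark $\|(PKP+\alpha I)^{-1}\|\le\frac1\alpha$, which, as you correctly observe, yields $\|h_\alpha\|_V^2\le\alpha^{-2}\|P\Phi_T\|_V^2$ --- the wrong power of $\alpha$. Your comparison $\alpha\|h_\alpha\|_V^2\le\mathcal F_1(h_\alpha;\alpha)\le\mathcal F_1(0;\alpha)=\operatorname{Vol}_\mu(\mathcal N)$ is the right way to recover the $\alpha^{-1}$ scaling. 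But the concluding inequality $\operatorname{Vol}_\mu(\mathcal N)\le(1+T)^2$ is not established by noting that $\mathcal N$ is a union of domains of influence of depth at most $T$: $\operatorname{Vol}_\mu(\mathcal N)$ is the Riemannian volume of a subset of $M$, controlled by $\operatorname{Vol}_\mu(M)$ and the geometry of $(M,\texttt{g},\mu)$, not by $T$ alone, and the hypothesis $T>\frac12\operatorname{diam}(M)$ points in the unhelpful direction. So as written you have proved $\|h_\alpha\|_V^2\le\frac1\alpha\operatorname{Vol}_\mu(\mathcal N)$, which is the natural form of the estimate, but not the specific constant $(1+T)^2$; closing that gap would require a normalization of $\operatorname{Vol}_\mu(M)$ that is not in the hypotheses. (To be fair, the paper's own proof does not supply this step either.)
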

\begin{proof}
First, we recall that operators  $K$  (\ref{K-operator}) and $P$ (\ref{B-projection}) are bounded operators $V\to V$ and hence $PKP:V\to V$ is bounded. Since $PKP$ is non-negative and selfadjoint, $\F_1$ is strictly convex and the minizer is unique.
 Using \eqref{eq: minimize 41} we see that
%
the Fr\'echet derivative of $h\mapsto \mathcal F_1(h;\a)$ at $h\in V$ in the  direction $\eta\in V$ is given by 
$$
D\mathcal F_1(\,\cdotp;\a)|_h\eta=\bra \eta, (PKP+\a)h +P\Phi\cet_V .$$ For a fixed $\a$, the Fr\'echet derivative is zero when the boundary source function $h_\a$ is a solution of (\ref{eq1}),  and $h_\a$ is the minimizer for the functional (\ref{possupahka1}). Note that $PKP+\alpha I\ge \alpha I$ and $\|(PKP+\alpha I)^{-1}\|_V\le \frac{1}{\alpha}.$
\end{proof}

\begin{theorem}\label{min_theorem_a} Let $h_\alpha\in V$ be the solution of the equation (\ref{eq1}). For $\beta\in(0,1),$ the unique minimizer  $a=a{{(\alpha,\beta}})\in Y$
of the functional $\mathcal F_2(a;\beta, h_\a)$, see (\ref{possupahka2}),
is the solution of the equation
\begin{equation}\label{eq2}
(L+\beta)a=-{N_Y}Q\p_t KPh_\a
\end{equation}
 where 
\begin{align}
\label{L-oper}
L\colon Y\to Y, \quad L={N_Y}Q\,\left(R\Lambda R \p_t-\hat P\p_t\Lambda +K\right).
\end{align}
Furthermore, $L\colon Y\to Y$ is non-negative, bounded, and selfadjoint.
\end{theorem}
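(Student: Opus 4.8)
The plan is to argue exactly as in the proof of Theorem \ref{min_theorem_h}: I would compute the Fr\'echet derivative of $a\mapsto \mathcal F_2(a;\beta,h_\a)$ on the Hilbert space $Y$ and identify its unique critical point with the solution of \eqref{eq2}. The one genuinely new feature is that the minimization runs over $Y\subset Z\subset V$, so the gradient must be taken in the $Y$-inner product rather than in the $V$-inner product; this is precisely the role of the operator $N_Y Q$. Write $W=R\Lambda R\p_t-\hat P\p_t\Lambda+K$, so that $L=N_Y Q W$. Two structural facts drive everything. First, $K=K^*$ and $K\ge 0$, since \eqref{blago1} gives $\langle Kf,h\rangle_V=\langle u^f(T),u^h(T)\rangle_{L^2(M)}$. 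Second, because the coefficients of \eqref{eq: Wave} are time independent and the Cauchy data vanish, $\p_t u^a=u^{\p_t a}$, and restricting to $\p M$ yields $\p_t\Lambda=\Lambda\p_t$ on $Y$. Using these together with the cancellation of the two $\langle\p_t a,K\p_t a\rangle_V$ terms in \eqref{eq: minimize 42}, I would record that, up to an additive constant and the regularization term,
\[
\mathcal F_2(a;\beta,h_\a)=\langle Wa,a\rangle_V-2\langle KPh_\a,\p_t a\rangle_V+\beta\|a\|_Y^2 .
\]

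Next I would prove the duality identity that makes $N_Y Q$ compute $Y$-gradients: for $f\in V$ and $\eta\in Y$,
\[
\langle N_Y Q f,\eta\rangle_Y=\langle f,\eta\rangle_V .
\]
Indeed $Q$ solves $(1-\p_t^2)Qf=f$ with vanishing endpoints, so integration by parts gives $\langle Qf,w\rangle_Z=\langle(1-\p_t^2)Qf,w\rangle_V=\langle f,w\rangle_V$ for all $w\in Z$; taking $w=\eta\in Y$ and using that $N_Y$ is the orthogonal projection of $Z$ onto $Y$ (so $\langle N_Y Q f,\eta\rangle_Y=\langle Qf,\eta\rangle_Z$) gives the claim. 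Applying the same computation to the distribution $\p_t f\in H^{-1}$ yields $\langle N_Y Q\p_t f,\eta\rangle_Y=-\langle f,\p_t\eta\rangle_V$. Differentiating the displayed functional in a direction $\eta\in Y$ then gives
\[
\tfrac12 D\mathcal F_2|_a\,\eta=\langle Wa,\eta\rangle_V-\langle KPh_\a,\p_t\eta\rangle_V+\beta\langle a,\eta\rangle_Y ,
\]
where the first term uses that $W$ is $V$-selfadjoint on $Y$. Rewriting $\langle Wa,\eta\rangle_V=\langle La,\eta\rangle_Y$ and $-\langle KPh_\a,\p_t\eta\rangle_V=\langle N_Y Q\p_t KPh_\a,\eta\rangle_Y$ via the duality identity, the condition $D\mathcal F_2|_a\equiv 0$ becomes $\langle (L+\beta)a+N_Y Q\p_t KPh_\a,\eta\rangle_Y=0$ for all $\eta\in Y$, which is \eqref{eq2}.

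It then remains to establish the three properties of $L$ and deduce uniqueness. For boundedness, $N_Y$ and $Q$ are bounded and $W\colon Y\to V$ is bounded once one writes $\hat P\p_t\Lambda=\hat P\Lambda\p_t$ on $Y$, so that every summand of $W$ factors as $\p_t\colon Y\to V$ followed by bounded operators on $V$. For selfadjointness, using $(R\Lambda R)^*=\Lambda$ (from $\Lambda^*=R\Lambda R$ and $R^2=I$) one checks that $S:=\Lambda-R\Lambda R$ is skew-adjoint and, since $R\p_t=-\p_t R$, commutes with $\p_t$; a short computation reduces $\langle Wa,b\rangle_V-\langle a,Wb\rangle_V$ to $\langle \p_t a,Sb\rangle_V-\langle Sa,\p_t b\rangle_V=[\,Sa\cdot b\,]_0^{2T}=0$, the boundary term vanishing because $a,b\in Y$ vanish at $t=0,2T$. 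Hence $\langle La,b\rangle_Y=\langle Wa,b\rangle_V=\langle a,Wb\rangle_V=\langle a,Lb\rangle_Y$, so $L=L^*$ on $Y$. For non-negativity, taking $h=0$ in \eqref{possupahka2} and \eqref{eq: minimize 42} gives $\langle La,a\rangle_Y=\langle Wa,a\rangle_V=\|u^a_t(T)\|_{L^2(M)}^2+\|u^a(T)\|_{H^1(M)}^2\ge 0$. Finally $L+\beta\ge\beta>0$ is boundedly invertible, so \eqref{eq2} has a unique solution, which is the unique minimizer by strict convexity of $\mathcal F_2$.

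I expect the main obstacle to be the bookkeeping around the time derivative: a priori $\p_t\Lambda a$ need not lie in $V$ and the naive operator $W$ is unbounded on $V$. The remedy---replacing $\p_t\Lambda$ by $\Lambda\p_t$ on $Y$, licensed by time-translation invariance and the vanishing Cauchy data---is exactly what simultaneously makes $W\colon Y\to V$ bounded, validates the integration-by-parts identities (all boundary contributions are killed by $a,b\in H^1_0$), and renders $W$ selfadjoint. Justifying these manipulations at the regularity afforded by \eqref{map_for_h}--\eqref{map_for_a} is the only delicate point; the algebra of $R,\Lambda,J,K,\hat P$ is then routine.
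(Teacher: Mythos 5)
Your proposal is correct and follows essentially the same route as the paper's proof: the same reduction of $\mathcal F_2$ to boundary quantities via the Blagovestchenskii and energy identities, the same duality identity $\langle N_YQf,a\rangle_Y=\langle f,a\rangle_V$ (the paper's Lemma \ref{inner_lemma}), the same verification that $L$ is bounded, selfadjoint and non-negative (with non-negativity coming from $\langle Wa,a\rangle_V=E(a,T)+\|u^a(T)\|^2_{L^2(M)}$), and the same Fr\'echet-derivative/strict-convexity conclusion. The only minor deviations are that you obtain boundedness of $\hat P\p_t\Lambda$ on $Y$ from the commutation $\p_t\Lambda=\Lambda\p_t$ (valid here by time-independence of the coefficients and vanishing Cauchy data) where the paper invokes the Lasiecka--Triggiani regularity \eqref{solution-map2}, and that you let $Q$ absorb the distributional $\p_t$ on the right-hand side where the paper instead shows $\p_t KPh_\a\in V$ directly; both variants are sound.
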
  


To prove the above result, we first consider the energy function and prove auxiliary Lemmas \ref{energy_lemma} and \ref{inner_lemma}, and then we prove Theorem \ref{min_theorem_a}.

We observe that  as  for $a\in Y$ we have $a=\hat P a$,
we can write the operator $L$ in \eqref{L-oper} in a more symmetric form
\beq\label{formula with hat P}
La={N_Y}Q\,\left(R\Lambda R \p_t\hat P-\hat P\p_t\Lambda +K\right)a,\quad a\in Y.
\eeq

\begin{definition}
Let us define the energy function in the following way
\begin{align}\label{eat}
E(a,t)=\|u_t^{a} (t) \|^2_{L^2(M)}+\|\nabla_{\texttt{g}} u^{a} (t) \|^2_{L^2(M)},\quad t\in (0,2T).
\end{align}
\end{definition}  Hence we can replace the second term in (\ref{possupahka2}) using the identity
\begin{align}
\label{Tformula12}
 \|u^{a}(T)\|^2_{H^1(M)}=E(a,T)-\|u_t^{a} (T) \|^2_{L^2(M)}+\|u^{a} (T) \|^2_{L^2(M)}.
\end{align} The benefits of doing this can be seen from the following Lemma.
\begin{lemma}
\label{energy_lemma} For $a\in V$ and energy function defined in (\ref{eat}) satisfies
\begin{align} 
\label{Tformula11}
 E(a,T)=-2\bra a,\hat P\p_t\Lambda a \cet_V.
\end{align}
\end{lemma}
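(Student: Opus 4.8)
The plan is to derive \eqref{Tformula11} from the classical energy flux identity for the wave equation \eqref{eq: Wave}, integrated in time from $0$ to $T$. Fix $a\in Y$, abbreviate $u=u^a$, and differentiate the energy \eqref{eat} in $t$, obtaining
\[
\tfrac{d}{dt}E(a,t)=2\langle u_{tt}(t),u_t(t)\rangle_{L^2(M)}+2\langle\nabla_{\texttt{g}}u(t),\nabla_{\texttt{g}}u_t(t)\rangle_{L^2(M)}.
\]
I would then substitute $u_{tt}=-\mathcal A u$ from \eqref{eq: Wave} into the first term and apply Green's formula for the operator $\mathcal A$ of \eqref{A} to the second. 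Because $\nu$ is the \emph{interior} unit normal, Green's formula carries a minus sign, $\langle\nabla_{\texttt{g}}u,\nabla_{\texttt{g}}w\rangle_{L^2(M)}=\langle\mathcal A u,w\rangle_{L^2(M)}-\langle\partial_\nu u,w\rangle_{L^2(\partial M)}$, so the two interior terms cancel and only the boundary flux survives:
\[
\tfrac{d}{dt}E(a,t)=-2\langle\partial_\nu u(t),u_t(t)\rangle_{L^2(\partial M)}.
\]

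Next I would insert the two boundary identities carried by $u=u^a$: the Neumann condition in \eqref{eq: Wave} gives $\partial_\nu u|_{\partial M\times(0,2T)}=a$, while the definition \eqref{Lam-operator} of $\Lambda$ gives $u|_{\partial M\times(0,2T)}=\Lambda a$, hence $u_t|_{\partial M}=\partial_t\Lambda a$ after commuting the time derivative with the trace. This turns the flux identity into $\tfrac{d}{dt}E(a,t)=-2\langle a(t),\partial_t\Lambda a(t)\rangle_{L^2(\partial M)}$. Integrating from $0$ to $T$ and using that the homogeneous initial conditions in \eqref{eq: Wave} force $E(a,0)=0$ yields $E(a,T)=-2\int_0^T\langle a(t),\partial_t\Lambda a(t)\rangle_{L^2(\partial M)}\,dt$. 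Since $a\in Y$ vanishes for $t\ge T$, inserting the time cutoff $\hat P$ of \eqref{T-projection} leaves the integral unchanged, and the right-hand side is precisely $-2\langle a,\hat P\partial_t\Lambda a\rangle_V$, which is \eqref{Tformula11}.

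The main obstacle is that the differentiation of $E(a,t)$ and the boundary pairing $\langle\partial_\nu u,u_t\rangle_{L^2(\partial M)}$ are only formal at the regularity $u^a\in C([0,2T];H^{3/2}(M))$ guaranteed by \eqref{map_for_a}; the conormal trace $\partial_\nu u$ and the differentiability in $t$ of the energy must be justified. I would first prove the identity for smooth $a$, say $a\in C_0^\infty(\partial M\times(0,T))$, where $u^a$ is smooth up to the boundary and every step above is classical, and then pass to general $a\in Y$ by density. This limiting argument is legitimate because both sides are continuous on $Y$: the left side $E(a,T)$ is controlled by $\|(u^a(T),u_t^a(T))\|_{H^{3/2}\times H^{1/2}}$ via \eqref{map_for_a}, while the right side is the bounded bilinear form $a\mapsto-2\langle a,\hat P\partial_t\Lambda a\rangle_V$, where $\partial_t\Lambda a=\Lambda\partial_t a$ by the time-translation invariance of \eqref{eq: Wave}, with $\partial_t a\in V$ and $\Lambda\colon V\to V$ bounded by \eqref{Lam-operator}. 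The only genuinely delicate bookkeeping is keeping track of the weight $\mu$ in $dV_\mu$, in the definition of $\partial_\nu$, and in the boundary measure, so that Green's formula holds in this weighted setting with the correct sign and no stray factors.
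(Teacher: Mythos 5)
Your proof is correct and follows essentially the same route as the paper: differentiate the energy, integrate by parts so only the boundary flux $-2\langle \partial_\nu u, u_t\rangle_{L^2(\partial M)}$ survives, substitute $\partial_\nu u = a$ and $u|_{\partial M}=\Lambda a$, and integrate from $0$ to $T$ using $E(a,0)=0$. Your additional care about justifying the formal computation by density from smooth sources is a welcome refinement that the paper's proof leaves implicit.
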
 

\begin {proof} Using (\ref{eat}) we get
\ba
E(t)
&=&\int_{M}[\p_tu^{a}(x,t)\p_tu^{a}(x,t)+\underset{j,k=1}{\overset{n}{\sum}}\texttt{g}^{jk}(x)\p_{x_j}u(x,t)\p_{x_k}u(x,t)]\,\mu(x)\mathrm{dV}_\texttt{g}(x).
\ea
Differentiation respect the time and integration by parts gives us
\ba
\p_tE(t)
\hspace{-2mm}&=&\hspace{-2mm}2\int_{M}\left[\p_t^2u^{a}(x,t)\p_tu^{a}(x,t)+\underset{j,k=1}{\overset{n}{\sum}}\texttt{g}^{jk}\p_{x_j}u(x,t)\p_{x_k}\p_tu(x,t)\right]\,\mu(x)(\mathrm{det}\, \texttt{g})^{\frac{1}{2}}\mathrm{d}x\\
\hspace{-2mm}&=&\hspace{-2mm}-2\int_{\p M}\left[a(t)\p_t\Lambda a(t)\right]\,\mathrm{dS}_g(x).
\ea
At time $t=0$ we have the initial values $\p_t u(x,0)=0$ and $u(x,0)=0$, and thus  $E(0)=0$. Thus
\ba
E(T)
=-2\int_{0}^{T}\int_{\p M}\left[a(t)\p_t\Lambda a(t)\right]\,\mathrm{dS}_g(x)\,\mathrm{d}t
= -2\bra a,\hat P\p_t\Lambda a \cet_V.
\ea
\vspace{-4mm}

\end{proof}


\begin{lemma}
\label{inner_lemma} Let Q be given in (\ref{Q_oper}) and ${N_Y}$ be the projector in (\ref{P0-projection}). For $a\in Y$ and $f\in V$ we have
\begin{align} 
\label{Tformula41}
\bra {N_Y}Qf, a\cet_{Y}=\bra f, a\cet_{V}.
\end{align}
\end{lemma}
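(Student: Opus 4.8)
The plan is to prove the identity in two stages: first show that $Q$ represents, in the $Z$-inner product, the inclusion $Z\hookrightarrow V$, i.e.
\beq\label{aux_key}
\bra Qf, b\cet_Z=\bra f,b\cet_V\qquad\text{for all }f\in V,\ b\in Z,
\eeq
and then transfer this to the $Y$-inner product using that ${N_Y}$ is the orthogonal projection of $Z$ onto $Y$. Here I read $Y\subset Z$ as a closed subspace carrying the inner product inherited from $Z$ (this is forced by the very definition of ${N_Y}$ in \eqref{P0-projection} as an orthogonal projection with range $Y$); note that for $a\in Y$, the zero extension to $(0,2T)$ lies in $Z$ because $a|_{t=T}=0$, and its $Z$-norm equals its $H^1_0((0,T);L^2(\p M))$-norm.

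To establish \eqref{aux_key}, I set $w=Qf\in Z$. By the defining property of $g$ as the Green's function of $(1-\p_t^2)$ with Dirichlet conditions at $t=0,2T$, the function $w$ solves $(1-\p_t^2)w=f$ on $(0,2T)$ with $w|_{t=0}=w|_{t=2T}=0$, for a.e.\ $x\in\p M$. Since $f\in L^2$ in time and $w\in H^1_0$ in time, the equation yields $\p_t^2 w=w-f\in L^2$, so $w$ is $H^2$ in $t$ and the integration by parts below is legitimate. For $b\in Z$ I expand the $Z$-inner product and integrate by parts once in $t$; the boundary terms vanish because $b|_{t=0}=b|_{t=2T}=0$:
\beq\label{aux_ibp}
\bra Qf,b\cet_Z=\bra w,b\cet_V+\bra \p_t w,\p_t b\cet_V=\bra w,b\cet_V-\bra \p_t^2 w,b\cet_V=\bra (1-\p_t^2)w,\,b\cet_V=\bra f,b\cet_V .
\eeq

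For the final step I take $a\in Y$. Since the inner product on $Y$ is the restriction of that on $Z$ and ${N_Y}Qf\in Y\subset Z$, I have $\bra {N_Y}Qf,a\cet_Y=\bra {N_Y}Qf,a\cet_Z$. Because ${N_Y}$ is an orthogonal (hence selfadjoint) projection in $Z$ and $a\in Y=\operatorname{Ran}({N_Y})$ gives ${N_Y}a=a$, I obtain $\bra {N_Y}Qf,a\cet_Z=\bra Qf,{N_Y}a\cet_Z=\bra Qf,a\cet_Z$, and then \eqref{aux_key} with $b=a$ yields $\bra Qf,a\cet_Z=\bra f,a\cet_V$, which is exactly \eqref{Tformula41}. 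The main point requiring care is purely the justification of the integration by parts in \eqref{aux_ibp}, which rests on the $H^2$-in-time regularity of $w=Qf$ noted above; the rest is a formal manipulation with the selfadjoint projection ${N_Y}$.
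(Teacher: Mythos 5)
Your proof is correct and follows essentially the same route as the paper: the paper's argument is exactly the chain $\bra {N_Y}Qf,a\cet_Y=\bra Qf,{N_Y}a\cet_Y=\bra Qf,a\cet_Z=\bra (1-\p_t^2)Qf,a\cet_V=\bra f,a\cet_V$, using the selfadjointness of the orthogonal projection ${N_Y}$ and the Green's-function property of $Q$. You have merely made explicit the integration by parts and the regularity of $Qf$ that the paper leaves implicit.
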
  
\begin{proof} Let $f\in V$ and $a\in Y$. 
 Definition (\ref{Q_oper}) also implies that $Qf\in Z$ and 
\begin{align}
\label{isosika}
\bra {N_Y}Qf, a\cet_{Y}=\bra Qf, {N_Y}a\cet_{Y}=\bra Qf, a\cet_{Y}=
\bra Qf
,a\cet_{Z}=\bra (1-\p_t^2)Qf
,a\cet_{V}=\bra f
,a\cet_{V}. 
\end{align}
\end{proof}
\begin{proof}[Proof of Theorem \ref{min_theorem_a}]
Let us first show that 
 $\pat KPh_\a\in V$.
%
To this end, observe that 
 $J$ increases smoothness  the time variable by one, that is,  $J: V\to H^1 \left((0,2T) ,L^2({\p M})\right)$.
 
 Moreover, by the definition of the set $\mathcal L$ in \pef{shapeL},
 we see that $RJh_\a|_{t=0}=0$ and $Jf(x,2T)=0$.
 First, this shows that  $J \HL Ph_\a\in H^1 \left((0,2T), L^2({\p M})\right)$.
 Second, 
 as by  \cite[Thm. 3.1(iii)]{Lasiecka2} and the trace theorem
 we have
\begin{eqnarray*}
\Lambda:\{a \in H^1\left((0,2T); L^2({\p M})\right):a(x,0)=0
\}\to C^1 \left([0,2T]; H^{\frac35-\frac12-\varepsilon} ({\p M})\right),
\end{eqnarray*}
we see that $R\HL RJP h_\a\in C^1 ([0,2T]; H^{\frac35-\frac12-\varepsilon}({\p M}))$.
These show that
$\pat KPh_\a\in V$. Hence, we have
%
%
%
 ${N_Y}Q\p_tKP h_\a\in Y$. To continue the proof, we need the following lemma.

\begin{lemma}
The operator $L:Y\to Y, L={N_Y}Q\,\left(R\Lambda R \p_t-\hat P\p_t\Lambda +K\right)$ is  bounded.
\end{lemma}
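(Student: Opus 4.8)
The plan is to factor the operator as $L = {N_Y}Q\,B$, where
$$
B \colon Y \to V, \qquad B = R\Lambda R\,\p_t - \hat P\p_t\Lambda + K,
$$
and to establish boundedness of the two factors separately. The outer factor is easy: by the remark after \eqref{Q_oper} the map $Q\colon V\to Z$ is bounded, and ${N_Y}\colon Z\to Z$ is an orthogonal projection with range $Y$, so ${N_Y}Q\colon V\to Y$ is bounded with norm at most $\|Q\|$. Thus the whole problem reduces to showing $B\colon Y\to V$ is bounded, which I would verify term by term.

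For the first and third terms the estimates are immediate compositions of bounded maps. Since $a\in Y=H_0^1\left((0,T);L^2(\p M)\right)$ has $\p_t a\in V$ with $\|\p_t a\|_V\le\|a\|_Y$, and since $R\colon V\to V$ is an isometry while $\Lambda\colon V\to V$ is bounded by \eqref{Lam-operator}, the term $R\Lambda R\,\p_t\colon Y\to V$ is bounded. Likewise $Y$ embeds continuously into $V$ and $K\colon V\to V$ is bounded by \eqref{K-operator}, so $K\colon Y\to V$ is bounded.

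The only delicate term is $\hat P\p_t\Lambda a$, where the time derivative now falls on $\Lambda a$ instead of on $a$; here the bare $V\to V$ continuity of $\Lambda$ does not suffice, and one must use the sharp trace (hidden regularity) estimate already invoked earlier in this proof. Namely, extending $a\in Y$ by zero to an element of $\{b\in H^1\left((0,2T);L^2(\p M)\right):b(x,0)=0\}$ with comparable norm, the mapping property
$$
\Lambda\colon\{b\in H^1\left((0,2T);L^2(\p M)\right):b(x,0)=0\}\to C^1\left([0,2T];H^{3/5-1/2-\varepsilon}(\p M)\right)
$$
from \cite[Thm.\ 3.1(iii)]{Lasiecka2} together with the trace theorem gives $\Lambda a\in C^1\left([0,2T];H^{3/5-1/2-\varepsilon}(\p M)\right)$ with norm controlled by $\|a\|_Y$. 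Hence $\p_t\Lambda a\in C\left([0,2T];H^{3/5-1/2-\varepsilon}(\p M)\right)\hookrightarrow V$, and applying the bounded projection $\hat P$ preserves the bound.

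Combining the three estimates yields $\|Ba\|_V\le C\|a\|_Y$, and therefore $\|La\|_Y=\|{N_Y}QBa\|_Y\le\|Q\|\,\|Ba\|_V\le C\|a\|_Y$, which proves that $L\colon Y\to Y$ is bounded. I expect the main obstacle to be exactly the middle term: one has to ensure that the time derivative of $\Lambda a$ is genuinely an element of $V$, which forces the use of the improved time-regularity of $\Lambda$ on sources vanishing at $t=0$ rather than its mere $V\to V$ continuity; the remaining two terms are routine.
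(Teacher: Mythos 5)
Your proof is correct and follows essentially the same route as the paper: boundedness is checked term by term, with the terms $R\Lambda R\,\p_t$ and $K$ handled by composing elementary bounded maps, the middle term $\hat P\p_t\Lambda$ treated via the $C^1\left([0,2T];H^{3/5-1/2-\varepsilon}(\p M)\right)$ regularity from \cite[Thm.\ 3.1(iii)]{Lasiecka2} plus the trace theorem, and the outer factor ${N_Y}Q$ bounded because $Q\colon V\to Z$ is bounded and ${N_Y}$ is an orthogonal projection. Your write-up is somewhat more explicit than the paper's (e.g.\ the zero extension of $a$ and the explicit norm chain), but the argument is the same.
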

\begin{proof}
Note that $K:Y\mapsto V$ is bounded.
 Also $\p_t:Y\to V$ is  bounded. Let $a\in Y$. The boundedness of the operator $R\Lambda R:V\to V$ implies $R\Lambda R \p_t  a\in V$. Let $a\in H_0^1\left((0,T);L^2({\p M})\right)$. 
  Due to
\cite[Thm. 3.1 (iii)]{Lasiecka2} and the trace theorem, we have 
 \begin{align}
\label{solution-map2}
\Lambda a\in C^1\left([0,2T];H^{\frac35-\frac12-\ve}({\p M})\right),\quad\ve>0,
\end{align} 
and thus $\hat P\p_t\Lambda:Y\to V$ is bounded. The map $(R\Lambda R \p_t-\hat P\p_t\Lambda +K):Y\to V$ is bounded. Using definitions of $Q$  in (\ref{Q_oper}) and ${N_Y}$ in (\ref{P0-projection}), we see that ${N_Y}Q(R\Lambda R \p_t-\hat P\p_t\Lambda +K):Y\to Y$ is bounded. 
\end{proof}
\begin{lemma}
The operator $L:Y\to Y$ is selfadjoint and non-negative.
\end{lemma}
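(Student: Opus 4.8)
The claim is that $L = {N_Y}Q\,(R\Lambda R \p_t - \hat P\p_t\Lambda + K)$ is selfadjoint and non-negative on $Y$. My overall strategy is to connect the quadratic form $\bra La, a\cet_Y$ directly to the energy functional $\mathcal F_2$ and the physical quantities it encodes, so that positivity becomes manifest. The key tool is Lemma \ref{inner_lemma}, which says $\bra {N_Y}Qf, a\cet_Y = \bra f, a\cet_V$ for $f\in V$, $a\in Y$; this immediately collapses the awkward ${N_Y}Q$ prefix whenever I pair $La$ against an element of $Y$.

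First I would compute the bilinear form. Taking $a, b\in Y$ and applying Lemma \ref{inner_lemma} with $f = (R\Lambda R \p_t - \hat P\p_t\Lambda + K)a$, I get
\beq\label{eq:Lform}
\bra La, b\cet_Y = \bra R\Lambda R \p_t a - \hat P\p_t\Lambda a + Ka,\; b\cet_V .
\eeq
Now I would identify each piece in terms of waves. The operator $K = R\Lambda R J - J\Lambda$ is, by the first Blagovestchenskii identity \eqref{blago1}, the boundary representation of the inner product $\bra u^a(T), u^b(T)\cet_{L^2(M)}$, so $\bra Ka, b\cet_V = \bra u^a(T), u^b(T)\cet_{L^2(M)}$. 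The middle term, by Lemma \ref{energy_lemma}, satisfies $\bra \hat P\p_t\Lambda a, a\cet_V = -\tfrac12 E(a,T)$; polarizing this gives the symmetric energy form $-\tfrac12 E(a,b,T)$, the polarized Dirichlet-plus-kinetic energy at time $T$. The remaining term $\bra R\Lambda R\p_t a, b\cet_V$ should, after using $\Lambda^* = R\Lambda R$ and integrating by parts in $t$ (noting $a(0)=a(T)=0$ for $a\in Y$), also reduce to a symmetric expression involving $\p_t u^a(T)$; this is where I must be careful.

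The main obstacle is the cross term $\bra R\Lambda R\p_t a, b\cet_V$: showing symmetry requires moving the $R\Lambda R$ and the $\p_t$ across the pairing and invoking the boundary conditions defining $Y$, and then recognizing the result as the polarization of a quantity like $\bra \p_t u^a(T), \cdot\cet$. I expect that when all three terms are assembled, \eqref{eq:Lform} rearranges into
\beq\label{eq:Lenergy}
\bra La, a\cet_Y = \|u_t^a(T)\|_{L^2(M)}^2 + \|\nabla_{\texttt{g}} u^a(T)\|_{L^2(M)}^2 = E(a,T) \ge 0,
\eeq
which is manifestly non-negative, and whose polarized form is symmetric in $(a,b)$, giving selfadjointness simultaneously. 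Concretely I would verify symmetry by checking $\bra La, b\cet_Y = \bra a, Lb\cet_Y$ term-by-term: the $K$ term is symmetric because $K$ represents a genuine $L^2(M)$ inner product via \eqref{blago1}; the energy term is symmetric by its definition as a polarized energy; and the cross term's symmetry follows from $\Lambda^* = R\Lambda R$ together with the integration-by-parts identity that the boundary conditions of $Y$ make valid. Non-negativity then follows because $\bra La, a\cet_Y$ equals a sum of squared norms, as in \eqref{eq:Lenergy}. The delicate bookkeeping is entirely in the integration by parts for the $\p_t$ terms, so I would isolate that as a short computation and treat the rest as assembly.
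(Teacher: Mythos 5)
Your approach is essentially the paper's: collapse the ${N_Y}Q$ prefix via Lemma \ref{inner_lemma}, use $\Lambda^*=R\Lambda R$, the selfadjointness of $K$, and integration by parts in $t$ (valid because $a\in Y$ vanishes at $t=0,T$, which is exactly why the paper inserts $\hat P$ via $a=\hat Pa$ to symmetrize the middle operator on $V$), and then identify the quadratic form with physical quantities via Lemma \ref{energy_lemma} and the Blagovestchenskii identity. One bookkeeping slip: your final identity should read $\bra La,a\cet_Y=E(a,T)+\|u^a(T)\|^2_{L^2(M)}$ rather than $E(a,T)$ alone, since the $K$-term contributes the extra $\|u^a(T)\|^2_{L^2(M)}$ while the two $\Lambda$-terms together give $-2\bra a,\hat P\p_t\Lambda a\cet_V=E(a,T)$; this does not affect non-negativity.
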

\begin{proof} Below we use formula \eqref{formula with hat P} several times.
For $f_1,f_2\in Y,$ due to Lemma \ref{inner_lemma} we have
\ba
  \bra {N_Y}Q(R\Lambda R \p_t\hat P-\hat P\p_t\Lambda +K)f_1,f_2\cet_{Y}= \bra (R\Lambda R \p_t\hat P-\hat P\p_t\Lambda +K)f_1,f_2)\cet_V.
\ea
Since operator $K:V\to V$ is selfadjoint and since $\Lambda^*=R\Lambda R,$ we have 
\ba
 \bra (R\Lambda R \p_t\hat P-\hat P\p_t\Lambda +K)f_1,f_2\cet_V = \bra f_1,(R\Lambda R \p_t\hat P-\hat P\p_t\Lambda +K)f_2\cet_V.
\ea
By  Lemma \ref{inner_lemma}, we get
\ba
\bra f_1,(R\Lambda R \p_t\hat P-\hat P\p_t\Lambda +K)f_2\cet_V = \bra f_1,{N_Y}Q(R\Lambda R \p_t\hat P-\hat P\p_t\Lambda +K)f_2\cet_{Y}.
\ea
Thus for $f_1,f_2\in Y$, we conclude that $ \bra Lf_1,f_2\cet_{Y} =\bra f_1,Lf_2\cet_{Y}. $
This proves that  $L:Y\to Y$ is selfadjoint.

Next we show that the operator $L:Y\to Y$ is non-negative. 
Recall that  $\Lambda^*=R\Lambda R:V\to V.$ Thus for $f\in Y$ we have
\ba
\bra f,Lf\cet_{Y}=\bra f,(\Lambda^* \p_t\hat P-\hat P\p_t\Lambda +K)f\cet_V = -2\bra f,\hat P\p_t\Lambda f \cet_V+\bra f,Kf\cet_V.
\ea
By definition for energy (\ref{eat}) and Blagovestchenskii identity (\ref{blago1}) we have
\ba
-2\bra f,\hat P\p_t\Lambda f \cet_V+\bra f,Kf\cet_V= E(f,T)+\|u^{f} (T) \|^2_{L^2(M)}\ge 0.
\ea
Therefore, for $f\in Y$, we  have showed that $ \bra f,Lf\cet_{Y}\ge 0. $ 
\end{proof}

Next we rewrite $ \mathcal F_2 (a;\beta , h_\a)$ in (\ref{eq: minimize 42})  by using
equations (\ref{Tformula12}), (\ref{Tformula11}), \emph{Blagovestchenskii identitities} (\ref{blago1}), (\ref{Tformula2}), and $\p_t u^a(T)= u^{\p_ta}(T)$.
These yield that
\ba
 \mathcal F_2 (a;\beta , h_\a)&=& \bra Ph_\a,KPh_\a\cet_V-2\bra Ph_\a,K\p_t a\cet_V\\ 
\nonumber
  &&-2\bra a,\hat P\p_t\Lambda a \cet_V+\bra a,Ka\cet_V+\beta\bra a,a \cet_{Y}. \label{eq: minimize 15}
\ea
As operators $K: V \to V$ and $\hat P: V \to V$ are selfadjoint, and $\Lambda^*=R\Lambda R$,
\ba
  \mathcal F_2 (a;\beta , h_\a)&=&\bra Ph_\a,KPh_\a\cet_V+2\bra \p_t KPh_\a,a\cet_V-\bra a,\hat P\p_t\Lambda a \cet_V\\ 
\nonumber
  &&+\bra a,R\Lambda R \p_t\hat P a \cet_V+\bra a,Ka\cet_V+\beta\bra a,a \cet_{Y}. \label{eq: minimize 16}
\ea
Further, Lemma \ref{inner_lemma} implies that $ \mathcal F_2 (a;\beta , h_\a)$ can be written in the form
\ba
  \mathcal F_2 (a;\beta , h_\a)&=&\bra Ph_\a,KPh_\a\cet_V+2\bra {N_Y}Q\p_t KPh_\a,a\cet_{Y}-\bra a,{N_Y}Q\hat P\p_t\Lambda a \cet_{Y}\\ \nonumber
  &&+\bra a,{N_Y}QR\Lambda R \p_t\hat P a \cet_{Y}+\bra a,{N_Y}QKa\cet_{Y}
  +\beta\bra a,a \cet_{Y}, \label{eq: minimize 17}
\ea
whereas the latter can be written as
\ba
  \mathcal F_2 (a;\beta , h_\a)=\bra Ph_\a,KPh_\a\cet_V+\bra (L+\beta)a+2{N_Y}Q\p_t KPh,a\cet_{Y}.\label{eq: minimize 18} 
\ea
The operator $L\colon Y\to Y$ is non-negative, bounded, and selfadjoint.  
Thus the functional $a\mapsto  \mathcal F_2 (a;\beta, h_\a)$ is strictly convex.
Hence the unique minimum of   $\mathcal F_2 (a;\beta , h_\a)$ is at the zero of the Fr\'echet derivative
of $a\mapsto   \mathcal F_2 (a;\beta , h_\a)$ at $a$ given by
\ba
 D  \mathcal F_2 (\cdotp;\beta , h_\a)|_a \xi =
\bra  {N_Y}Q\p_t KPh + (\beta + L)a, \xi  \cet_{Y},
\quad \xi\in Y.
\ea
The Fr\'echet derivative is zero when the boundary source  $a$ is the solution of the equation  (\ref{eq2}). 
Thus $$a(\beta, h_\a)= -(\beta + L)^{-1}{N_Y}Q\p_t KPh$$ is the minimizer for the functional (\ref{possupahka2}). This completes the proof of Theorem \ref{min_theorem_a}.
\end{proof}


\begin{lemma}
\label{finalLemma} 
Let $T>2\operatorname{diam}(\M)$ and let $h_\alpha\in V$ and
$a(\beta, h_\alpha) \in Y$ be the minimizers of 
(\ref{possupahka1}) and  
 (\ref{possupahka2}), respectively. Then 
\begin{eqnarray*}
  \lim_{\alpha\to 0} \lim_{\beta\to 0}
  \vc{  u^{a(\beta, h_\alpha)}(T)}{u^{a(\beta, h_\alpha)}_t(T)} = \vc{0}{\mathds{1}_{\mathcal N} },
\end{eqnarray*}
where limits are in $H_0^1(\M)\times L^2(\M),$ and $\mathcal N$ is defined in (\ref{B-set}). 
\end{lemma}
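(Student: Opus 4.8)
The plan is to read both minimisation problems as Tikhonov regularisations of linear control problems and then to feed in the two controllability (density) propositions. Introduce the bounded linear control maps
\[
A\colon V\to L^2(M),\ \ Ah=u^{Ph}(T),\qquad
W\colon Y\to H^1(M)\times L^2(M),\ \ Wa=\vc{u^a(T)}{u^a_t(T)},
\]
which are bounded by \eqref{map_for_h} and \eqref{map_for_a} (for $A$ one also uses finite speed of propagation, which places $u^{Ph}(T)$ in $L^2(\mathcal N)$). The first Blagovestchenskii identity \eqref{blago1} gives $\langle PKPh,h\rangle_V=\|Ah\|^2_{L^2(M)}$, that is $PKP=A^*A$; and the energy identity \eqref{Tformula11} combined with \eqref{blago1} gives $\langle Lf,f\rangle_Y=\|Wf\|^2_{H^1\times L^2}$, that is $L=W^*W$. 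Consequently $\mathcal F_1(h;\alpha)=\|\mathds{1}_{\mathcal N}-Ah\|^2_{L^2(M)}+\alpha\|h\|_V^2$ and $\mathcal F_2(a;\beta,h_\alpha)=\|Wa-b_\alpha\|^2_{H^1\times L^2}+\beta\|a\|_Y^2$ with $b_\alpha=\vc{0}{u^{Ph_\alpha}(T)}$, and the equations of Theorems \ref{min_theorem_h} and \ref{min_theorem_a} are precisely the Tikhonov normal equations $(A^*A+\alpha)h_\alpha=A^*\mathds{1}_{\mathcal N}$ and $(W^*W+\beta)a(\beta,h_\alpha)=W^*b_\alpha$.

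Next I would recall the standard convergence of Tikhonov-regularised solutions. From $A(A^*A+\alpha)^{-1}A^*=I-\alpha(AA^*+\alpha)^{-1}$ and the functional calculus for the bounded non-negative operator $AA^*$ one gets $\alpha(AA^*+\alpha)^{-1}\to P_{\ker A^*}$ strongly, hence $Ah_\alpha\to P_{\overline{\operatorname{Ran} A}}\mathds{1}_{\mathcal N}$ as $\alpha\to0$, and similarly $Wa(\beta,h_\alpha)\to P_{\overline{\operatorname{Ran} W}}b_\alpha$ as $\beta\to0$. For the outer problem, $\mathds{1}_{\mathcal N}\in L^2(\mathcal N)$ and Proposition \ref{lemma_Control_2} says $\operatorname{Ran} A$ is dense in $L^2(\mathcal N)$; thus $\mathds{1}_{\mathcal N}\in\overline{\operatorname{Ran} A}$ and
\[
u^{Ph_\alpha}(T)=Ah_\alpha\longrightarrow\mathds{1}_{\mathcal N}\quad\text{in }L^2(M)\ \text{as }\alpha\to0.
\]

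For the inner problem I would fix $\alpha$, so that $b_\alpha$ is a fixed vector in $H^1_0(M)\times L^2(M)$. Because $T>2\operatorname{diam}(M)$, Proposition \ref{lemma_Control 1} gives that $\operatorname{Ran} W$ is dense in $H^1_0(M)\times L^2(M)$, whence $b_\alpha\in\overline{\operatorname{Ran} W}$ and the Tikhonov convergence yields $Wa(\beta,h_\alpha)\to b_\alpha$ in $H^1\times L^2$ as $\beta\to0$. Letting first $\beta\to0$ and then $\alpha\to0$ and using the outer limit gives
\[
\lim_{\alpha\to0}\lim_{\beta\to0}\vc{u^{a(\beta,h_\alpha)}(T)}{u^{a(\beta,h_\alpha)}_t(T)}
=\lim_{\alpha\to0}\vc{0}{u^{Ph_\alpha}(T)}=\vc{0}{\mathds{1}_{\mathcal N}},
\]
which is the assertion.

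The hard part is the inner problem, because its regularisation is carried out in the \emph{energy} norm $H^1_0(M)\times L^2(M)$ rather than in $L^2(M)\times L^2(M)$ as in \cite{DKL}. Two things must be checked with care: that $W$ really maps $Y$ boundedly into the energy space — this is exactly where the sharper regularity \eqref{map_for_a} for sources $a\in Y$, together with the finite-energy bound on $u^a_t(T)$, is needed — and that Proposition \ref{lemma_Control 1} supplies density in this energy space and not merely in a weaker topology. A smaller point is that, although the individual waves $u^a(T)$ need not belong to $H^1_0(M)$, the penalty $\|u^a(T)\|^2_{H^1(M)}\to0$ forces their traces to vanish in the limit, so the convergence of the first component to $0$ indeed takes place in $H^1_0(M)$.
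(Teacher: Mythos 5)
Your proposal is correct, but it reaches the conclusion by a different mechanism than the paper. The paper's proof is a direct quasi-optimality argument: given $\varepsilon>0$, Propositions \ref{lemma_Control_2} and \ref{lemma_Control 1} supply competitors $h(\varepsilon)$ and $a_\varepsilon$ making the data-fit terms of \eqref{possupahka1} and \eqref{possupahka2} smaller than $\varepsilon$; comparing the value of the functional at the minimizer with its value at the competitor and choosing $\alpha\le\varepsilon/(1+\|h(\varepsilon)\|_V^2)$ (resp.\ $\beta\le\varepsilon/(1+\|a_\varepsilon\|_Y^2)$) gives $\|\mathds{1}_{\mathcal N}-u^{Ph_\alpha}(T)\|_{L^2}^2\le 2\varepsilon$ and the analogous bound for the second problem, whence the iterated limit. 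You instead invoke the normal equations of Theorems \ref{min_theorem_h} and \ref{min_theorem_a}, identify $PKP=A^*A$ and $L=W^*W$ via \eqref{blago1} and \eqref{Tformula11}, and use the spectral-calculus fact $\alpha(AA^*+\alpha)^{-1}\to P_{\ker A^*}$ strongly to get $Ah_\alpha\to P_{\overline{\operatorname{Ran}A}}\,\mathds{1}_{\mathcal N}$, then apply the same density propositions to remove the projections. Both routes rest on exactly the same two controllability results; the paper's version is more elementary and never needs the normal equations or the factorizations, while yours makes the Tikhonov structure explicit and yields as a by-product the identification of the limit as an orthogonal projection onto $\overline{\operatorname{Ran}A}$ (resp.\ $\overline{\operatorname{Ran}W}$) even when density fails. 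Your closing caveats are well placed: one does need $\overline{\operatorname{Ran}W}\supset H^1_0(M)\times L^2(M)$ inside the ambient space $H^1(M)\times L^2(M)$ (which holds since the dense set of Proposition \ref{lemma_Control 1} is generated by sources in $C^\infty_0(\p M\times(0,T))\subset Y$), and the individual waves $u^a(T)$ are not in $H^1_0(M)$, only their limit $0$ is.
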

\begin{proof}
Proposition \ref{dense1} implies that for  any $\varepsilon >0$ there is $h(\varepsilon)in V$ such that
\ba
\|\mathds{1}_{\mathcal N} -u^{Ph(\varepsilon)} (T)\|_{L^2(M)}^2 < \varepsilon.
\ea
On the other hand, for every $\alpha \in (0,1)$, the minimizer $h_\alpha$ satisfies
\ba
\|\mathds{1}_{\mathcal N}-u^{Ph_\alpha}(T)\|_{L^2(M)}^2 +\alpha\|h_\alpha\|^2_V\le
\|\mathds{1}_{\mathcal N} -u^{Ph(\varepsilon)} (T)\|_{L^2(M)}^2+\alpha\|h(\varepsilon)\|^2_V.
\ea
If $\alpha\le\alpha_0(\varepsilon)=\frac{\varepsilon}{1+\|h(\varepsilon)\|^2_V}$, we have
$
\|\mathds{1}_{\mathcal N}-u^{Ph_{\alpha}}(T)\|_{L^2(M)}^2 \le 2\varepsilon,
$
and hence
\begin{align}
\label{sikaniska}
u^{Ph_\alpha} (T)\to \mathds{1}_{\mathcal N}\quad\mbox{in $L^2(M)$ as}\quad\alpha \to 0.
\end{align}

By Proposition \ref{lemma_Control 1}, for $\varepsilon>0$ and  $h_\alpha\in Y,$ there exists a boundary source $a_\varepsilon=a_{\varepsilon,\alpha}$, for which 
\ba
\| u^{Ph_\alpha}(T)-\p_t u^{a_\varepsilon}(T)\|_{L^2(M)}^2 +\| u^{a_\varepsilon}(T)\|_{H^1(M)}^2< \varepsilon.
\ea
On the other hand for every $\beta \in (0,1)$  the minimizer $a(\beta, h_\alpha)$ satisfies
\ba
\| u^{Ph_\alpha}(T)-\p_t u^{{a(\beta, h_\alpha)}}(T)\|_{L^2(M)}^2 +\| u^{{a(\beta, h_\alpha)}}(T)\|_{H^1(M)}^2+\beta\|{a(\beta, h_\alpha)}\|^2_{Y}\\
\le
\| u^{Ph_\alpha}(T)-\p_t u^{a_\varepsilon}(T)\|_{L^2(M)}^2 +\| u^{a_\varepsilon}(T)\|_{H^1(M)}^2+\beta\|a_\varepsilon\|^2_{Y}.
\ea
We choose $\beta\le\frac{\varepsilon}{1+\|a_\varepsilon\|^2_{Y}}$, and thus
\ba
\| u^{Ph_\alpha}(T)-\p_t u^{{a(\beta, h_\alpha)}}(T)\|_{L^2(M)}^2 +\| u^{{a(\beta, h_\alpha)}}(T)\|_{H^1(M)}^2 \le 2\varepsilon,
\ea
and we see that $\p_t u^{{a(\beta, h_\alpha)}}(T)\to u^{Ph_\alpha} (T)$ in $L^2(M)$ and $u^{{a(\beta, h_\alpha)}}(T)\to 0$ in $H_0^1(M)$, as $\beta \to 0$.
This and (\ref{sikaniska}) yield the claim.

\end{proof}

\section{Focusing of waves}\label{sectionFOCUS}

In this section we  prove Theorem \ref{Thm 1}. 
\medskip

\noindent{\bf Notation 1.}
Let $T> 2\operatorname{diam}(M)$, let $\widehat{x}=\gamma_{\widehat
  z,\nu}({{{{{{\widehat t}}}}}})$, where $\widehat z\in {\p M}$, and $0<{{{{{{\widehat t}}}}}}<T$.
Let $\Gamma_k\subset {\p M}$ for $k=1,2,\dots$ be open neighborhoods of
$\widehat z$, such that $\hbox{diam}(\Gamma_k)<1/k$, $\Gamma_k \supset \overline \Gamma_{k+1}$
and $\bigcap_{k=1}^\infty\Gamma_k=\{\widehat z\}$.
\medskip

Let $a{{(\alpha,\beta}},k), \tilde a{{(\alpha,\beta}}, k) \in Y$ be functions described in Lemma \ref{finalLemma}, with the corresponding sets $\mathcal B\subset {\p M}\times \R_+$ of the form
\begin{eqnarray}
\label{set1}
  B(k) = {\p M}\times \left(T-(\widehat{t}-\frac{1}{k}), T\right),\,k\in\mathbb N,
\end{eqnarray}
and 
\begin{align}
\label{set2}
\tilde B(k) = \left( {\p M}\times \left(T-(\widehat{t} - \frac{1}{k}),T\right) \right) \,\cup\, \left(\Gamma_k \times \left(T-\widehat{t},T \right)\right),
\end{align}
respectively, where $k \in\mathbb N$.
Under these assumptions, we define
\begin{align}
\label{jonob}
  b{{(\alpha,\beta}},k) =
\tilde a{{(\alpha,\beta}}, k) - a{{(\alpha,\beta}},k)
\in Y.
\end{align}


By Lemma \ref{finalLemma},  in the space $H_0^1(M)\times L^2(M)$ we have the limits
\begin{eqnarray}\label{eq: A1}
  \lim_{\alpha\to 0}\lim_{\beta\to 0}\lim_{n\to \infty}
  \vc{  u^{a_n{{(\alpha,\beta}},k)}(T)}{u^{a_n{{(\alpha,\beta}},k)}_t(T)} &=& \vc{ 0}{\mathds{1}_{\mathcal N(k)}}, \\
\label{eq: A2}
  \lim_{\alpha\to 0}\lim_{\beta\to 0}\lim_{n\to \infty}
  \vc{  u^{\tilde a_n{{(\alpha,\beta}}, j,k)}(T)}{u^{\tilde a_n{{(\alpha,\beta}}, k)}_t(T)} &=& \vc{0}{\mathds{1}_{\tilde{\mathcal N}(k)}},
\end{eqnarray}
where
$$
 \mathcal N(k) = M({\p M},{{{{{{\widehat t}}}}}}-\frac{1}{k}),  \quad\quad
  \tilde{\mathcal N}(k) = M({\p M},{{{{{{\widehat t}}}}}}-\frac{1}{k}) \cup M(\Gamma_k, \widehat{t}).
$$
Let $k \in\mathbb N$ and we define 
\begin{eqnarray}\label{jeps}
  \Omega_{k} = \tilde{\mathcal N}(k) \setminus \mathcal N(k). 
\end{eqnarray}

\begin{proof}[Proof of Theorem \ref{Thm 1}] 

As $ \Omega_{k+1}\subset  \Omega_{k}$ and $ \Omega_{k}\subset  
 M(\hat z,{{{{{{\widehat t}}}}}}+\frac{1}{k})\setminus
M({\p M},{{{{{{\widehat t}}}}}}-\frac{1}{k}),$ it follows from  \cite[Lemma 12]{DKL},
that if ${{{{\widehat t}}}}<\tau_{\p M}(\widehat z)$ then
$\bigcap_{k=1}^\infty\Omega_{k}=\{\hat x\}$ where 
$\widehat{x}=\gamma_{\widehat
  z,\nu}({{{{\widehat t}}}})$. If ${{{{\widehat t}}}}>\tau_{\p M}(\widehat z)$ 
 then $\bigcap_{k=1}^\infty\Omega_{k}=\emptyset.$

Lemma \ref{finalLemma} and Theorem \ref{Main Fuu} 
imply that the 
 boundary sources 
$a{{(\alpha,\beta}},k), \tilde a{{(\alpha,\beta}}, k) \in Y$  described in Lemma \ref{finalLemma}
and $a{{(\alpha,\beta}},k)$ given in
\eqref{jonob}
 satisfy  in the space $H_0^1(M)\times L^2(M)$ the limit
\beq\label{limit in lemma}
\lim_{\alpha\to 0}\lim_{\beta\to 0}\lim_{n\to \infty}
\vc{  u^{b_n{{(\alpha,\beta}},j,k)}(T)}{u^{b_n{{(\alpha,\beta}},k)}_t(T)} =
\vc{0}{\mathds{1}_{\Omega_{k}}},
\eeq
where $\Omega_{k}$ is defined in (\ref{jeps}).
The volumes of the sets $\Omega_{k}$  can be written as the inner products,
\[
\underset{n\to\infty}{\lim}\bra \pat b_n{{(\alpha,\beta}},k), \Phi_T\cet=\underset{n\to\infty}{\lim}\bra u_t^{b_n{{(\alpha,\beta}},k)}(T), 1\cet_{L^2(M)}=\vol (\Omega_{k})
\] and hence we can also determine $\vol (\Omega_{k})$ using the map $\Lambda$. 
Thus we can  define
\beq\label{fb formula}
f_n{{(\alpha,\beta}},k)=\frac 1{\vol (\Omega_{k}) }b_n{{(\alpha,\beta}},k),
\eeq
and we are ready to prove the the main result of this paper.

Below, $X'$ is the dual space of $X$ with respect to the pairing defined by the $L^2$-inner product of the distributions and test functions. 
Let 
$X^{s}= \mathcal D((1-\mathcal A)^{\frac{s}{2}})\subset H^s(M),$ $s\ge 0$ be the domain of the $s$-th power of the selfadjoint operator $(1-\mathcal A)$
endowed with the Neumann boundary values and let $X^{-s}$ denote the dual space of $X^{s}$. Note that as  $H_0^{s}(M)\subset  X^s$, for $s>0$,
we have that the embedding
$ X^{-s}\subset H^{-s}(M)$ is continuous.


We have $\lim_{k\to \infty} (\lim_{\alpha\to 0}\lim_{\beta\to 0}\lim_{n\to \infty}{u^{b_n{{(\alpha,\beta}},k)}(T)} )=0$ in $H^1_0(M)$ and
\ba
& &\lim_{k\to \infty}(\lim_{\alpha\to 0}\lim_{\beta\to 0}\lim_{n\to \infty}
{u^{b_n{{(\alpha,\beta}},k)}_t(T)} )=
\lim_{k\to \infty}
\frac{\mathds{1}_{\Omega_k}(x)}{\vol(\Omega_k )}
=\delta_{\hat y}(x)
\ea
in $C(M)'\subset ({H}^{s}(M))'\subset \mathcal D ((1-\mathcal A)^{-\frac{s}{2}})
\subset {H}^{-s}(M),$ where $s>\frac{\dim(M)}{2}=\frac{n}{2}$.   Thus the claims of Theorem \ref{Thm 1} follow from  
and formulas \eqref{limit in lemma} and (\ref{fb formula}). \end{proof}



\begin{lemma} Let $T_1>T>\diam(M)$.
For $\widehat z\in {\p M}$ and ${{{{\widehat t}}}}<\tau_{\p M}(\widehat z)$ and the point  $\widehat x=\gamma_{\widehat z,\nu}({{{{\widehat t}}}})\in M$ we have 
\beq
\label{eq: A0B}
 \lim_{k\to \infty}\lim_{\alpha\to 0^+}\lim_{\beta\to 0^+}\lim_{n\to \infty}
 u^{f_n{{(\alpha,\beta}},k)} \bigg|_{\p M\times (T,T_1)}=
G(\,\cdotp,\,\cdotp;\hat x,T)\bigg|_{\p M\times (T,T_1)},\hspace{-1cm}
\eeq 
where the limit takes place in $\left(H^s_0(\p M\times (T,T_1)\right))'$, $s>\dim(M)/2$.
Moreover, if ${{{{\widehat t}}}}>\tau_{\p M}(\widehat z)$ the above limit is zero.

\end{lemma}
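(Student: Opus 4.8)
The plan is to reduce the statement to the convergence of the Cauchy data at time $T$, already established in Theorem \ref{Thm 1}, together with the continuity of a single ``free evolution, then restrict to the boundary'' operator in the relevant dual spaces.

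First I would observe that since each $f_n(\alpha,\beta,k)\in Y$ is supported in $\p M\times(0,T)$, for $t>T$ the wave $u^{f_n(\alpha,\beta,k)}$ solves the homogeneous Neumann problem $(\p_t^2-\A)w=0$ on $M\times(T,\infty)$, $\p_\nu w|_{\p M}=0$, with Cauchy data $(w,\p_t w)|_{t=T}=(u^{f_n(\alpha,\beta,k)}(T),u^{f_n(\alpha,\beta,k)}_t(T))$. Denote by $\mathcal W(\phi_0,\phi_1):=w|_{\p M\times(T,T_1)}$ the map sending Cauchy data at $t=T$ to the boundary trace of the free Neumann evolution. The Green's function $G(\cdot,\cdot;\hat x,T)$ is, for $t>T$, exactly the free Neumann evolution of the Cauchy data $(0,\delta_{\hat x})$: integrating $(\p_t^2-\A)G=\delta_{\hat x}\delta_T$ across $t=T$ yields $G(\cdot,T^+)=0$ and $\p_t G(\cdot,T^+)=\delta_{\hat x}$, while $\p_\nu G=0$. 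Hence $G(\cdot,\cdot;\hat x,T)|_{\p M\times(T,T_1)}=\mathcal W(0,\delta_{\hat x})$, and it suffices to prove that $\mathcal W$ extends to a bounded operator $H^{-s+1}(M)\times H^{-s}(M)\to(H^s_0(\p M\times(T,T_1)))'$ and then to pass the limits of Theorem \ref{Thm 1} through it.

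Then I would establish the boundedness of $\mathcal W$ by duality. For $\psi\in C_0^\infty(\p M\times(T,T_1))$, let $v$ solve the backward problem $(\p_t^2-\A)v=0$ on $M\times(T,T_1)$, $\p_\nu v|_{\p M}=\psi$, $(v,\p_t v)|_{t=T_1}=(0,0)$. Green's identity for the wave operator, together with $\p_\nu w=0$ and the vanishing Cauchy data of $v$ at $T_1$, gives $\bra \mathcal W(\phi_0,\phi_1),\psi\cet=\bra\phi_0,\p_t v(T)\cet_{L^2(M)}-\bra\phi_1,v(T)\cet_{L^2(M)}$ for smooth data. This reduces the claim to the regularity estimate $\|v(T)\|_{H^s(M)}+\|\p_t v(T)\|_{H^{s-1}(M)}\le C\|\psi\|_{H^s_0(\p M\times(T,T_1))}$. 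After time reversal this is the regularity of a forward Neumann problem, which follows from \cite{Lasiecka2}; the condition $\psi\in H^s_0$ supplies the compatibility conditions at $t=T_1$ needed to gain this regularity up to the time $T$. Since $s>\dim(M)/2$, the Sobolev embedding $H^s(M)\subset C(M)$ makes $v(T)$ continuous, so that $\bra\delta_{\hat x},v(T)\cet=v(T)(\hat x)$ is well defined and $\mathcal W(0,\delta_{\hat x})$ acts on test functions as $\psi\mapsto -v(T)(\hat x)$.

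Finally I would combine the two ingredients. The functional $(\phi_0,\phi_1)\mapsto \bra\phi_0,\p_t v(T)\cet-\bra\phi_1,v(T)\cet$ is continuous on $H^{-s+1}(M)\times H^{-s}(M)$, hence also on the energy space $H^1_0(M)\times L^2(M)$ that embeds into it; therefore the iterated limits in $n,\beta,\alpha,k$ may be taken inside the pairing. Applying \eqref{eq: A0} of Theorem \ref{Thm 1}, which states that the Cauchy data converge to $(0,\delta_{\hat x})$ (in the energy space for the inner limits and in $H^{-s+1}(M)\times H^{-s}(M)$ for the outer limit in $k$), the iterated limit of $\bra u^{f_n(\alpha,\beta,k)}|_{\p M\times(T,T_1)},\psi\cet$ equals $-v(T)(\hat x)=\bra G(\cdot,\cdot;\hat x,T)|_{\p M\times(T,T_1)},\psi\cet$ for every $\psi$, which is the assertion. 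When $\hat t>\tau_{\p M}(\hat z)$, Theorem \ref{Thm 1} gives that the Cauchy data converge to $(0,0)$, whence the trace converges to $0$. The hard part will be the second step: pinning down the hidden-regularity estimate for the dual Neumann problem with the correct Sobolev indices so that the pairing $\bra\phi_0,\p_t v(T)\cet-\bra\phi_1,v(T)\cet$ closes against data in $H^{-s+1}(M)\times H^{-s}(M)$, which is precisely where the trace and regularity theory of \cite{Lasiecka2} and the $H^s_0$ compatibility must be used carefully.
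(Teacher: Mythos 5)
Your proposal is correct and follows essentially the same route as the paper: both arguments reduce the claim to the continuity of the Cauchy-data-to-boundary-trace map $W$ on dual Sobolev spaces, obtain that continuity by duality from the regularity of a backward wave problem driven by the test function, and then push the limit \eqref{eq: A0} through $W$. The only differences are cosmetic --- you phrase the dual problem with Neumann data $\psi$ and make explicit the identification of $G(\cdot,\cdot;\hat x,T)|_{t>T}$ with the free Neumann evolution of $(0,\delta_{\hat x})$, whereas the paper states the adjoint via a Dirichlet backward problem and cites \cite{KKL} (Lemma 2.42 and Theorem 2.46) for exactly the hidden-regularity estimate you flag as the remaining technical point.
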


 \begin{proof} We will show that we can define boundary values (or the trace) of both
 sides of equation \eqref{eq: A0}. 
To this end, consider the map $W:(\phi_0,\phi_1)\to u|_{\p M\times (T,T_1)}$,
where $T_1>T$ and 
\begin{align}
\label{eq: Wave 2}
\begin{cases}
  \p_t^2u(x,t)+\A u(x,t)=0,\quad \hbox{ in } M\times (T,T_1),\\
  u|_{t=T}=\phi_0,\quad u_t|_{t=T}=\phi_1,  \
  \p_\nu u|_{\p M \times (T,\infty)}=0.
\end{cases}
\end{align}
The map $W:H^1_0(M)\times L^2(M)\to L^2\left(\p M\times (T,T_1)\right)$ is bounded,
 its adjoint is the map $W^*:h\mapsto (\p_t w|_{t=T},w_t|_{t=T})$
where $w$ is the solution of the time-reversed wave equation with the Dirichlet boundary value,
\begin{align*}
\begin{cases}
   \p_t^2w(x,t)+\A w(x,t)=0,\quad \hbox{ in } M\times (T,T_1),\\
  w|_{t=T_1}=0,\quad w_t|_{t=T_1}=0,  \
  w|_{\p M \times (T,T_1)}=h .
\end{cases}
\end{align*}
The map $W^*:  L^2\left(\p M\times (T,T_1)\right)\to \left(H^1_0(M)\times L^2(M)\right)'$
is continuous (see \cite{KKL}, Lemma 2.42). Also, the restriction of the map $W^*$ to a smoother Sobolev spaces,  $W^*:  H^s_0\left(\p M\times (T,T_1)\right)\to H^s_0(M)\times H^{s+1}_0(M),$ $s>0$ is continuous by \cite{KKL}, Theorem 2.46. This implies that  the map $W$ has a continuous extension 
$W:\left(H^{s}_0(M)\times H^{s+1}_0(\M)\right)'\to \left(H^s_0(\p M\times (T,T_1))\right)'$.
We can  use this to define the Dirichlet boundary value for a non-smooth solution of a Neumann problem in the weak sense and we define $$u|_{\p M \times (T,T_1)}=W(\phi_0,\phi_1)$$ for a solution $u$ of \eqref{eq: Wave 2} with $(\phi_0,\phi_1)\in \left(H^{s}_0(M)\times H^{s+1}_0(M)\right)'$.  As the map $W:\left(H^{s}_0(M)\times H^{s+1}_0(\M)\right)'\to \left(H^s_0(\p M\times (T,T_1))\right)'$  is continuous, we obtain \eqref{eq: A0B} from the limit
 \eqref{eq: A0}.

\end{proof}

 Using methods developed in \cite{BelishevBasis} we next consider a special case of an isotropic, or, a conformally Euclidean metric 
 \begin{lemma}\label{lemmaLast} 
Assume that 
$M\subset \R^m$ and 
the operator $\mathcal A$ is of the form $\mathcal A=-c(x)^2\Delta$. Then for $w_j(x)=x_j$ we have
\beq\label{coordinate inner product}
\langle u^f(x,T), w_j\rangle_{L^2(M)}=
\langle \Lambda^* (\Phi_t\frac{\partial w_j}{\partial \nu})-\Phi_T w_j ,f\rangle.
\eeq
\end{lemma}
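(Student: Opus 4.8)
The plan is to generalise the second Blagovestchenskii identity \eqref{Tformula2}, whose test function is the constant $1$, by replacing $1$ with the coordinate function $w_j$. The crucial point is that under the isotropic assumption $\A=-c(x)^2\Delta$, with $\Delta$ the Euclidean Laplacian, the coordinate function $w_j(x)=x_j$ lies in the kernel of $\A$, since $\A w_j=-c(x)^2\Delta x_j=0$. This is exactly the property of the constant $1$ that drives \eqref{Tformula2}, and it is why the argument is confined to the conformally Euclidean case: in an anisotropic medium $\A x_j$ need not vanish.

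Concretely, I would set $w(t)=\bra u^f(t),w_j\cet_{L^2(M)}$ and differentiate twice in time. Using $\pat^2 u^f=-\A u^f$ from \eqref{eq: Wave}, we get $w''(t)=-\bra \A u^f(t),w_j\cet_{L^2(M)}$. Applying Green's identity for the formally selfadjoint operator $\A$ transfers $\A$ onto $w_j$; since $\A w_j=0$ the interior term drops out and only boundary contributions of the schematic form $\int_{\p M}\big(w_j\,\p_\nu u^f-u^f\,\p_\nu w_j\big)\,\mathrm{dS}$ remain. Inserting the Neumann condition $\p_\nu u^f=f$ and the definition $u^f|_{\p M}=\Lambda f$ of the Neumann-to-Dirichlet map rewrites $w''(t)$ purely in terms of the boundary data $f$ and $\Lambda f$.

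Then, using the zero initial data $w(0)=w'(0)=0$, I would recover the target quantity by $w(T)=\int_0^T (T-s)\,w''(s)\,ds$ and note that $T-s=\Phi_T(\cdot,s)$ on $[0,T]$, see \eqref{ihk}. The boundary integral carrying $f$ becomes $-\bra \Phi_T w_j,f\cet_V$, and the one carrying $\Lambda f$ becomes $\bra \Phi_T\,\p_\nu w_j,\Lambda f\cet_V=\bra \Lambda^*(\Phi_T\,\p_\nu w_j),f\cet_V$ after moving $\Lambda$ to its adjoint. Adding the two gives \eqref{coordinate inner product}. Specialising to $w_j\equiv 1$, so that $\p_\nu w_j=0$, recovers \eqref{Tformula2} exactly, which pins down every sign and confirms the convention for the interior normal and the conormal derivative $\p_\nu$ in \eqref{A}.

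The main obstacle will be the rigorous justification of the integration by parts, because for $f\in V$ the solution $u^f$ has only finite regularity (the map \eqref{map_for_h} lands in $C([0,2T];H^{3/5-\varepsilon}(M))$), so the boundary traces appearing in Green's identity are not classically defined. I would circumvent this by first establishing \eqref{coordinate inner product} for smooth data $f\in C_0^\infty(\p M\times(0,2T))$, where all the above manipulations are legitimate, and then extending to $f\in V$ by density, using that both sides are bounded linear functionals of $f$ (the right-hand side via boundedness of $\Lambda$, hence of $\Lambda^*$, and the left-hand side via \eqref{map_for_h}).
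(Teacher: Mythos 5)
Your proposal is correct and follows essentially the same route as the paper: the paper likewise defines $I_j(t)=\int_M u^f(x,t)w_j(x)c(x)^{-2}dx$, uses $\mathcal A w_j=0$ to reduce $\p_t^2 I_j$ to the boundary terms $\int_{\p M}((\Lambda f)\p_\nu w_j-fw_j)\,dS$, and solves the resulting ODE with zero initial data to obtain \eqref{coordinate inner product}. Your additional remarks on the sign check against \eqref{Tformula2} and on the density argument for $f\in V$ only make explicit what the paper leaves implicit.
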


\begin{proof}
As $w_j(x)=x_j$ satisfies $\mathcal Aw_j=0$, the inner product 
$$
I_j(t)=\int_M u^f(x,t)w_j(x) c(x)^{-2}dx
$$
satisfies the initial boundary value problem 
\ba
\p_t^2 I_j(t)=\int_{\p M}((\Lambda f)\frac{\partial w_j}{\partial \nu}-f w_j)dS(x),\quad
\p_t I_j(t)|_{t=0}=0,\,\,\,
I_j(t)|_{t=0}=0.
\ea
By solving this ordinary differential equation we obtain \eqref{coordinate inner product}.
\end{proof}

Lemma \ref{lemmaLast}  implies that
when  the operator $\mathcal A$ has the form $\mathcal A=-c(x)^2\Delta$,
the coordinates of the point $\hat x$ where the waves focus can be computed a posteriori.

\begin{corollary}\label{cor: focusing coordinates}
Assume that $M\subset \R^m$ and the operator $\mathcal A$ is of the form $\mathcal A=-c(x)^2\Delta$. 
Let $\widehat z\in {\p M}$, $\widehat x=\gamma_{\widehat z,\nu}({{{{\widehat t}}}})\in M$,
$0<{{{{\widehat t}}}}<T$ and let 
$f_n{{(\alpha,\beta}},k)$ be the sources defined in 
 Theorem \ref{Thm 1}. Then the Euclidean coordinates of the point $\hat x=(\hat x_\ell)_{\ell=1}^m \in \R^m$ are given by
\[
\underset{k\to \infty}{\lim}\left(\underset{\alpha\to 0}\lim
\underset{\beta\to 0}
{\lim} \underset{n\to\infty}{\lim}
\frac{\langle u^{f_n{{(\alpha,\beta}},k)},w_\ell\rangle_{L^2(M)}}{\langle u^{f_n{{(\alpha,\beta}},k)},1\rangle_{L^2(M)}}\right)
=\hat x_\ell,
\]
where the inner products on the left hand side are determined by
$\Lambda$ via the formulas \eqref{Tformula2} and \eqref{coordinate inner product}.
\end{corollary}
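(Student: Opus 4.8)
The plan is to combine the energy-norm convergence of Theorem \ref{Thm 1} with a boundary representation of the relevant inner products of the type proved in Lemma \ref{lemmaLast}, and then to identify the resulting quantity as a $\mathrm{dV}_\mu$-weighted centroid of the shrinking sets $\Omega_k$ that collapses to $\hat x$. The first observation is that the geometric information is carried by the \emph{time derivative} $u_t^{f_n(\alpha,\beta,k)}(T)$, since by \eqref{eq: A main limit} the wave value $u^{f_n(\alpha,\beta,k)}(T)$ itself tends to $0$; the quantities to pair against are therefore $u_t^{f_n(\alpha,\beta,k)}(T)$. As $M$ is bounded, both $w_\ell(x)=x_\ell$ and the constant function $1$ lie in $L^2(M)$, so $v\mapsto\langle v,w_\ell\rangle_{L^2(M)}$ and $v\mapsto\langle v,1\rangle_{L^2(M)}$ are continuous linear functionals on $L^2(M)$. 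Because the inner limits in \eqref{eq: A main limit} converge in $L^2(M)$, I may pass them through these functionals; for each fixed $k$ this yields
\[
\lim_{\alpha\to0}\lim_{\beta\to0}\lim_{n\to\infty}\langle u_t^{f_n(\alpha,\beta,k)}(T),w_\ell\rangle_{L^2(M)}=\frac{1}{\vol(\Omega_k)}\int_{\Omega_k}x_\ell\,\mathrm{dV}_\mu ,
\]
and the denominator converges correspondingly to $\frac{1}{\vol(\Omega_k)}\int_{\Omega_k}\mathrm{dV}_\mu$.

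Next I would verify that these two inner products are determined by $\Lambda$. For the constant $1$ this is exactly \eqref{Tformula2} applied to $\partial_t f_n(\alpha,\beta,k)$, using $u^{\partial_t a}(T)=\partial_t u^a(T)$, just as already used in the proof of Theorem \ref{Thm 1} to express $\vol(\Omega_k)$ through the boundary data. For $w_\ell$ I would rerun the computation of Lemma \ref{lemmaLast}: since $\mathcal A w_\ell=-c(x)^2\Delta x_\ell=0$, the function $I_\ell(t)=\langle u^f(t),w_\ell\rangle_{L^2(M)}$ solves the same second-order ordinary differential equation as in that proof, and integrating it \emph{once} (rather than twice) in time gives a boundary expression for $\partial_t I_\ell(T)=\langle u_t^f(T),w_\ell\rangle_{L^2(M)}$ in terms of $\Lambda$. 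This is the time-derivative analogue of \eqref{coordinate inner product} and shows that the left-hand ratio is computable from the boundary data alone.

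Finally, I would let $k\to\infty$. Because the factor $\vol(\Omega_k)$ is common to numerator and denominator it cancels, and the ratio equals the $\mathrm{dV}_\mu$-weighted centroid $\big(\int_{\Omega_k}x_\ell\,\mathrm{dV}_\mu\big)\big/\big(\int_{\Omega_k}\mathrm{dV}_\mu\big)$; in particular the precise volume normalisation is irrelevant to the limit. By the proof of Theorem \ref{Thm 1}, for $\hat t<\tau_{\p M}(\hat z)$ the sets $\Omega_k$ are nested with $\bigcap_k\Omega_k=\{\hat x\}$, so $\operatorname{diam}(\Omega_k)\to0$. The elementary bound $\big|\text{centroid}_\ell(\Omega_k)-\hat x_\ell\big|\le\sup_{x\in\Omega_k}|x_\ell-\hat x_\ell|$ then forces the centroid to converge to $\hat x_\ell$, which is the asserted value.

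The only genuinely new analytic ingredient is the time-derivative representation of $\langle u_t^f(T),w_\ell\rangle$ in the second paragraph, and this is a one-line modification of Lemma \ref{lemmaLast}. I expect the main point to get right to be conceptual rather than technical: one must pair against $u_t^{f_n}(T)$ rather than $u^{f_n}(T)$ (the latter focuses to zero), and recognise that the $\vol(\Omega_k)$-normalisation built into $f_n$ is exactly what renders the pairing a finite centroid. Everything else reduces to continuity of the $L^2$ pairing together with properties of the sets $\Omega_k$ already established for Theorem \ref{Thm 1}.
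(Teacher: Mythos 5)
Your proof is correct, and it follows the route the paper intends: the paper offers no argument for this corollary beyond the remark that it ``follows from Lemma \ref{lemmaLast}'', so your write-up supplies details the text omits. One point deserves emphasis. As literally printed, the corollary pairs $w_\ell$ and $1$ against $u^{f_n(\alpha,\beta,k)}(\cdot,T)$, which by \eqref{eq: A main limit} tends to $0$ in $H^1_0(M)$; both numerator and denominator of the ratio then vanish in the inner limits, and the quotient is not determined by Theorem \ref{Thm 1} alone. You correctly diagnose that the pairing must be taken against $\p_t u^{f_n(\alpha,\beta,k)}(\cdot,T)=u^{\p_t f_n(\alpha,\beta,k)}(\cdot,T)$, which converges in $L^2(M)$ to $\vol(\Omega_k)^{-1}\mathds{1}_{\Omega_k}$; this matches how the paper itself computes $\vol(\Omega_k)$ in the proof of Theorem \ref{Thm 1}, namely by applying \eqref{Tformula2} to $\p_t b_n(\alpha,\beta,k)$. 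With that reading, your three steps --- passing the inner limits through the continuous $L^2(M)$ pairings with $w_\ell$ and $1$, the boundary representation of $\langle u^f_t(T),w_\ell\rangle_{L^2(M)}$ obtained by integrating the ODE of Lemma \ref{lemmaLast} once (equivalently, applying \eqref{coordinate inner product} to the source $\p_t f_n(\alpha,\beta,k)$, which lies in $V$ because $f_n(\alpha,\beta,k)\in Y$), and the centroid bound $|\hbox{centroid}_\ell(\Omega_k)-\hat x_\ell|\le\sup_{x\in\Omega_k}|x_\ell-\hat x_\ell|\to 0$ coming from the nestedness of the $\Omega_k$ and $\bigcap_k\Omega_k=\{\hat x\}$ --- are all sound and together constitute a complete proof.
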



\section{Construction of boundary sources sources via iterated measurements}\label{sectionITERATION}
In this section we present a modified time-reversal iteration scheme for determination of the boundary sources.
$h_\alpha$ and $ a{{(\alpha,\beta}})$ given in \eqref{minimizer a} and
\eqref{minimizer ab}, respectively. We explain this in a general framework.

 Let $H$ be Hilbert space and let $L:H\to H$ be linear, non-negative selfadjoint operator. Let $\alpha\in(0,1)$ and $f\in H$. Then there is a solution $g_\alpha$ for problem
\begin{equation}
\label{karju}
\left(
L+\a
\right)g_\alpha=f.
\end{equation}
Let $\omega>0$ be such that $\omega>2(1+\Vert L \Vert_H)$, and let
\begin{eqnarray}
\label{Sdef}
  S=(1-\frac {\alpha}{\omega})I-\frac {1}{\omega}L.
\end{eqnarray}
Then \eqref{karju} is equivalent to $(I-S)g_\alpha=\displaystyle \frac{1}{\omega}f.$

We define a sequence $g_n\in
Z$, $n=1,2,\ldots$ by
\begin{eqnarray}
\label{firstItScheme}
  g_0(\alpha) = \displaystyle \frac{1}{\omega}f, \quad
  g_n (\alpha)= g_0(\alpha) + Sg_{n-1}(\alpha), \quad n=1,2,\ldots.
\end{eqnarray}

\sloppy{
\begin{theorem}[Iteration of boundary sources] Let $g_\alpha$ be defined by (\ref{karju}) and let the sequence $g_1(\alpha), g_2(\alpha),\ldots$ be defined by (\ref{firstItScheme}).  Then 
$
  \lim_{n\to \infty} g_{n}(\alpha)  = g_\alpha
$ in the space $H$. \label{Main Fuu}
\end{theorem}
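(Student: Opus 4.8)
The plan is to recognize the recursion \eqref{firstItScheme} as a fixed-point iteration for the affine map $g\mapsto g_0 + Sg$, to verify that this map is a contraction on $H$, and then to conclude convergence to its unique fixed point by a geometric error estimate, finally identifying that fixed point with $g_\alpha$. First I would record the reformulation already noted in the text: since
\[
I-S = I-\Bigl(1-\tfrac{\alpha}{\omega}\Bigr)I+\tfrac{1}{\omega}L=\tfrac{1}{\omega}(L+\alpha),
\]
the equation \eqref{karju} is equivalent to $(I-S)g_\alpha=\tfrac{1}{\omega}f=g_0$, that is, $g_\alpha=g_0+Sg_\alpha$. Hence $g_\alpha$ is exactly a fixed point of the affine map defining the iteration.

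The key step is to prove the contraction estimate $\|S\|_H<1$. Since $L$ is bounded, selfadjoint and non-negative, so is $S=(1-\tfrac{\alpha}{\omega})I-\tfrac{1}{\omega}L$, and for a selfadjoint operator one has $\|S\|_H=\sup_{\|v\|_H=1}|\langle Sv,v\rangle_H|$. For a unit vector $v$ I would bound the quadratic form using $0\le\langle Lv,v\rangle_H\le\|L\|_H$, which gives
\[
1-\frac{\alpha+\|L\|_H}{\omega}\ \le\ \langle Sv,v\rangle_H\ \le\ 1-\frac{\alpha}{\omega}.
\]
The right endpoint is strictly less than $1$ because $\alpha>0$, and the left endpoint exceeds $-1$ (in fact exceeds $1/2$), since the hypothesis $\omega>2(1+\|L\|_H)$ together with $\alpha<1$ forces $\tfrac{\alpha+\|L\|_H}{\omega}<\tfrac{1+\|L\|_H}{2(1+\|L\|_H)}=\tfrac12$. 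Consequently $|\langle Sv,v\rangle_H|\le 1-\tfrac{\alpha}{\omega}<1$ for every unit $v$, so $q:=\|S\|_H\le 1-\tfrac{\alpha}{\omega}<1$.

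Finally I would close with the error recursion. Subtracting the fixed-point identity $g_\alpha=g_0+Sg_\alpha$ from $g_n=g_0+Sg_{n-1}$ yields $g_n-g_\alpha=S(g_{n-1}-g_\alpha)$, whence by induction $g_n-g_\alpha=S^n(g_0-g_\alpha)$ and
\[
\|g_n-g_\alpha\|_H\ \le\ q^n\,\|g_0-g_\alpha\|_H\ \xrightarrow[n\to\infty]{}\ 0,
\]
which is precisely the asserted convergence in $H$. (Equivalently, one may expand $g_n=\sum_{j=0}^{n}S^jg_0$ and invoke the Neumann series $\sum_{j=0}^{\infty}S^j=(I-S)^{-1}$, convergent in operator norm because $q<1$, to obtain $g_n\to (I-S)^{-1}g_0=\omega(L+\alpha)^{-1}\tfrac{1}{\omega}f=g_\alpha$.)

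The only genuine obstacle here is the contraction estimate $\|S\|_H<1$; the remainder is the standard Banach iteration. That estimate relies on combining the non-negativity and boundedness of $L$ with the explicit lower bound $\omega>2(1+\|L\|_H)$ on the relaxation parameter, which is exactly the condition that confines the spectrum of the selfadjoint operator $S$ to the interval $(-1,1)$.
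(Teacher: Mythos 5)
Your proof is correct and follows essentially the same route as the paper: both establish the spectral confinement $\tfrac12 I\le S\le (1-\tfrac{\alpha}{\omega})I$ from the non-negativity and boundedness of $L$ together with $\omega>2(1+\Vert L\Vert_H)$, conclude $\Vert S\Vert_H<1$, and then sum the resulting geometric iteration (the paper phrases the last step via the Neumann series $(I-S)^{-1}=\sum_n S^n$, which you also note as an equivalent formulation of your contraction argument).
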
}

\begin{proof} 
 Since  operator $L$ is a positive operator satisfying $0\le L\le \|L\| I$ and $\frac 1  \omega (\alpha+\Vert L \Vert)<\frac 12$, we see using spectral theory and  (\ref{Sdef}) that  $\frac 12 I\le S\le (1-\frac {\alpha}{\omega})I$.
Hence  $\Vert S\Vert<1$.  
Thus we see using the  Neumann series  that 
 \begin{eqnarray*}
g_\alpha = (I-S)^{-1}\Big(\frac {f}{\omega} \Big ) =\underset{n=0}{\overset{\infty}{\sum}} S^n \Big(\frac {f}{\omega} \Big )=  \lim_{n\to \infty} g_{n}(\alpha).
\end{eqnarray*}

\end{proof}

To obtain the boundary sources $h_\alpha$ and $ a{{(\alpha,\beta}})$
that produce the focusing waves we apply Theorem \ref{Main Fuu} in the two cases: 
To obtain $h_\alpha$ we consider the setting of
Theorem \ref{min_theorem_h} where
 the Hilbert space $H$  is $V$, the operator $L$ is defined by $$L=PKP\quad\hbox{and}
\quad f=P\Phi_T.$$
 To obtain $ a{{(\alpha,\beta}})$  we consider the setting of
 Theorem \ref{min_theorem_a} where the Hilbert space $H$  is $Y$, the operator $L$ is defined by $$L={N_Y}Q\,\left(R\Lambda R \p_t\hat P-\hat P\p_t\Lambda +K\right)
 \quad\hbox{and}
\quad f=-{N_Y}Q\p_t KPh_\a.$$
In these cases, we call the iteration \eqref{firstItScheme} the \emph{modified time reversal iteration} scheme as 
in the iteration \eqref{firstItScheme}
we iterate  simple operators, such as ${N_Y},Q,\hat P$ and the time-reversal operator $R$, and the measurement operator $\Lambda$. In particular, the iteration \eqref{firstItScheme} can be implemented in an adaptive way,
where we do not make physical measurements to obtain the complete operator $\Lambda$, but evaluate the operator $\Lambda$ 
only for the boundary sources appearing in the iteration. In other words, we do not make measures to obtain the whole operator
(or ``matrix'') $\Lambda$ but make a measurement only when the operator $\Lambda$ is called in the iteration.
By doing this, the effect of the measurement errors is reduced as in each step of the
iteration, the measurement errors are independent. This strategy to do imaging using iteration of Neumann-to-Dirichlet map originates
from works of Cheney, Isaacson, and Newell \cite{Cheney,Isaacson}, see also \cite{CIL} the applications for acoustic measurements.
%


\section{Computational study in $1+1$ dimensions} 
\label{sec_computations}
In this section we present a computational implementation of our energy focusing method for a $1 + 1$-dimensional wave equation.
Let $M$ be the half axis $M=[0,\infty)\subset \R$, $T > 0$ 
and consider 
the Neumann-to-Dirichlet operator $ \Lambda= \Lambda_c$,
\begin{align*}
 \Lambda : L^{2}(0,2T) \to L^{2}(0,2T),\quad
 \Lambda f=u^f|_{x=0},
\end{align*}
where $u$ is the solution of 
\begin{align}
\label{dartwader}
&\left(\frac {\p^2}{\p t^2} - c(x)^2 \frac {\p^2}{\p x^2}\right) u(x,t) = 0 \quad \text{in $M\times (0,2T)$},
\\\nonumber
&\p_x u(0,t) = f(t),
\quad u|_{t = 0} =0,\quad  \p_t u|_{t=0} = 0.
\end{align}
We assume that 
    \begin{align}\label{c_bounds}
C_0\le c(x)\le C_1, \quad \supp(c-1) \subset (L_0, L_1)
    \end{align}
for some $0 < C_0 < C_1$ and $0 < L_0 < L_1$.
In order to be able to control $u(x,T)$ for $x \in (L_0,L_1)$
using $f$ in the sense of Proposition \ref{dense1}, we assume furthermore that 
\begin{align}
\label{aikapoika}
T> \frac{L_1}{C_0}.
\end{align}


\begin{figure}
\centering
\includegraphics[scale=0.09]{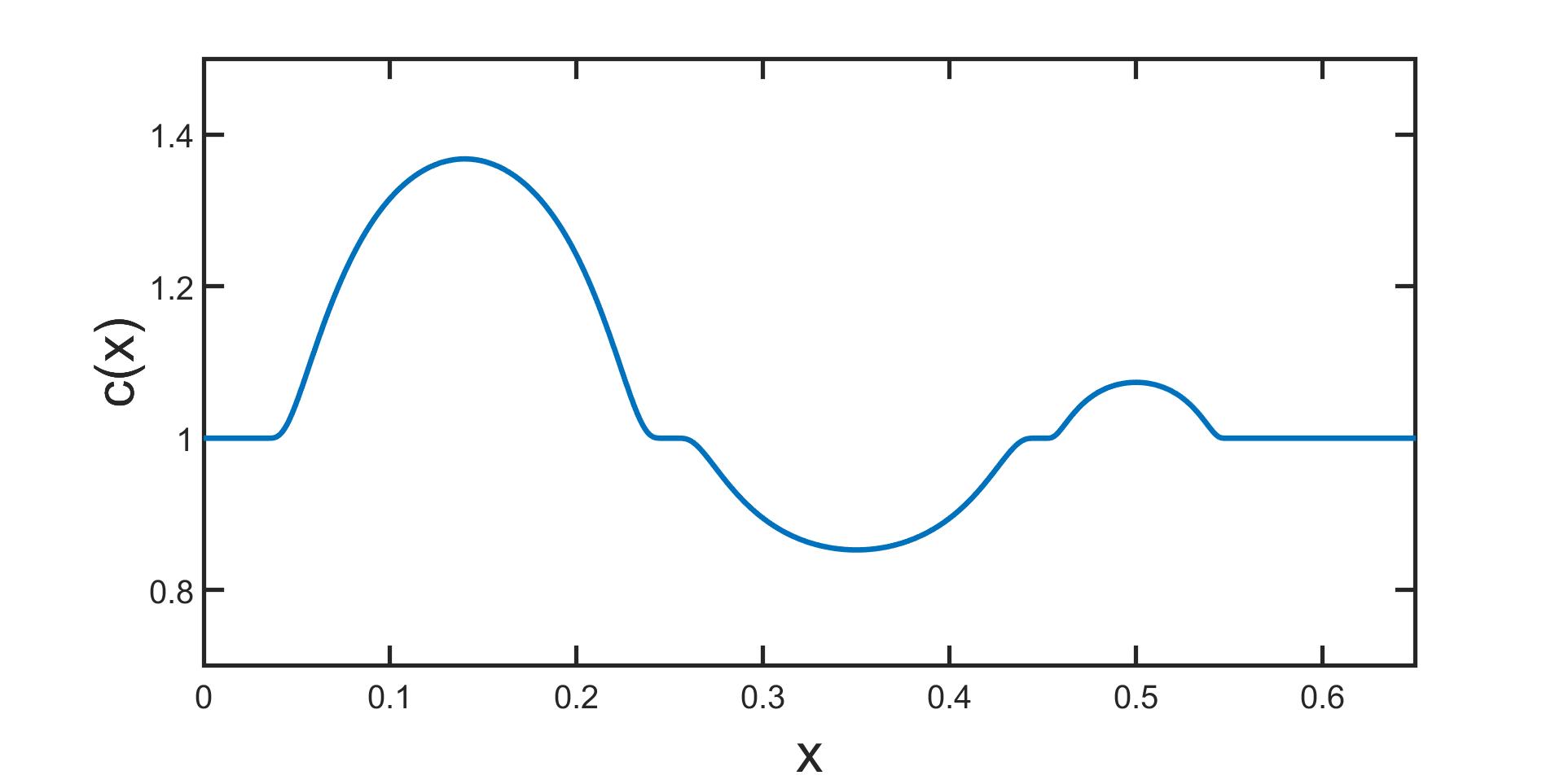}\quad \includegraphics[scale=0.12]{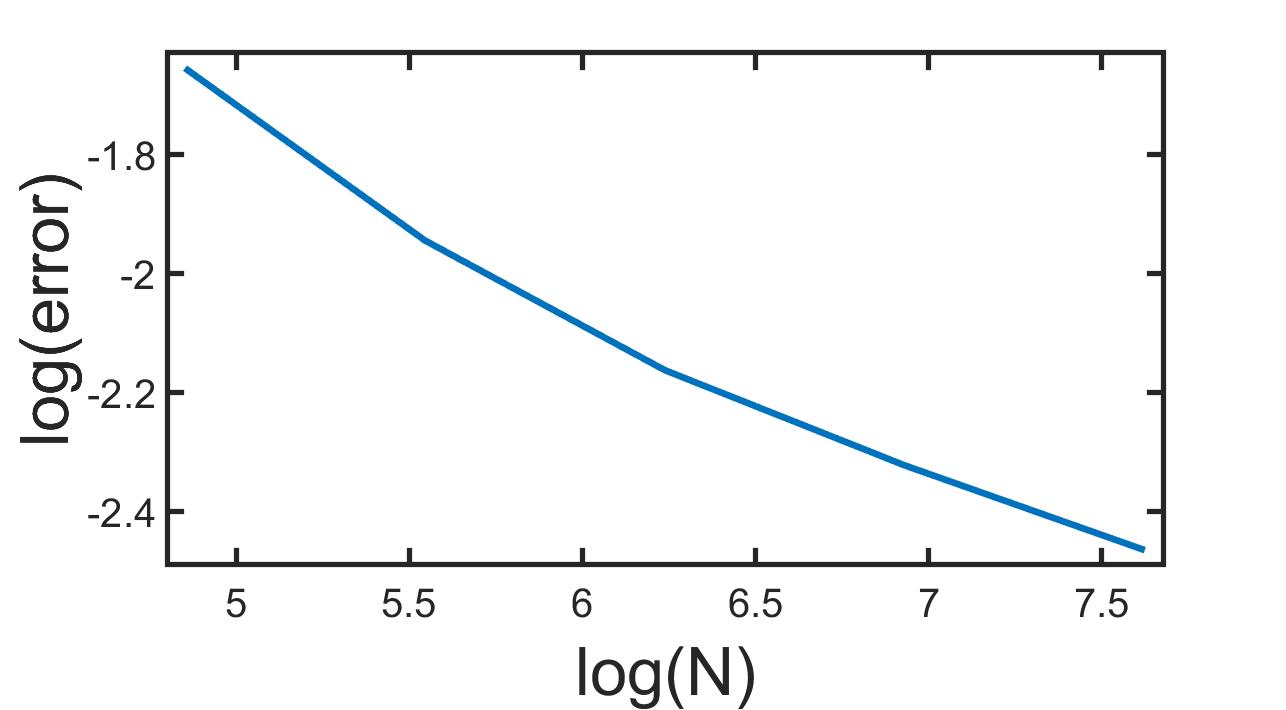}
\caption{Left: The function $c(x)$ used in computational examples. Right: Convergence of the error (\ref{error}) as a function of $\bold{N}$ in log-log axes.
}
\label{fig:sikapossu12}    
\end{figure}
We use the wave speed function $c$ in Figure \ref{fig:sikapossu12}
in all the computational examples below. 
It satisfies the bounds (\ref{c_bounds}) with $L_0 = 0.05$, $L_1 = 0.55$, $C_0 = 0.8$ and
$C_1 = 1.4$. Moreover, we take $T=2$ and then (\ref{aikapoika}) holds.
In the one dimensional case, the travel time metric is given by metric tensor
$g=c(x)^{-2}dx^2$ and the corresponding distance function $d(x_1, x_2) =d_g(x_1, x_2) $ (i.e., travel time beween
points is given by
\beq
d(x_1, x_2) = \int_{x_1}^{x_2} \frac{1}{c(x)} dx,\quad x_1\leq x_2.
\eeq 
We denote by ${\bf x}(r)$ the point that satisfies $d(0,{\bf x}(r))=r$, that is, ${\bf x}(r)\in M$
is the point which travel time to the boundary point 0 is $r$. 
The domain of influence for the boundary point $0$ and time $r>0$ are
\begin{align}
\label{Mr}
M(r) = \{ x \in M; d(0,x) \le r \}.
 \end{align}

\subsection{Simulation of measurement data}

We use $H^1$-conformal piecewise affine finite elements on a regular grid on $(0,2T)$ to discretize the Neumann-to-Dirichlet operator $\Lambda$. 
Let us explain this in more detail. For $\bold{N}\in \zeta$ and $n =1,...,2\bold{N}-1$ we write $h = T/\bold{N}$ and denote by $\phi_{n,\bold{N}} \in H_0^1(0,2T)$ the function that is supported on $[(n-1) h, (n+1)h]$, that 
satisfies $\phi_{n,\bold{N}}(nh) = 1$, and whose restrictions on 
$[(n-1) h, nh]$ and $[nh, (n+1)h]$ are affine. 
Then the subspace 
\begin{align}
\label{pcsfjoukko}
\mathcal P^\bold{N}=\linspan\big\{\phi_{1,\bold{N}},\dots,\phi_{2\bold{N}-1,\bold{N}}\big\}\subset H^1_0(0,2T) 
\end{align}
consists of piecewise affine functions and we write 
\begin{align}
\label{pcsfjoukko2}
P^{\bold N}:H^1_0(0,2T)\to\mathcal P^{\bold N},\quad P^{\bold N} f(t) =\sum_{j=1}^{2{\bold N}-1} f(jh)\phi_{j,\bold N}(t).
\end{align}
for the corresponding interpolation operator. 
The function $u^{\phi_{1,\bold{N}}}$, solving (\ref{dartwader}) with $f=\phi_{1,\bold{N}}$, is computed with high accuracy using the $k$-Wave solver \cite{Treeby}. Then we define the discretization of $\Lambda$,
    \begin{align*}
\Lambda_{\bold{N}}^{(d)} : \mathcal P^{\bold{N}} \to \mathcal P^{\bold{N}},
    \end{align*}
by 
$\Lambda_{\bold{N}}^{(d)} \phi_{1,\bold{N}} = P^\bold{N} (u^{\phi_{1,\bold{N}}}|_{x=0})$
together with the translation invariance in time,
$\Lambda_{\bold{N}}^{(d)} \phi_{j,\bold{N}}(t) = \Lambda_{\bold{N}}^{(d)} \phi_{1,\bold{N}}(t-(j-1)h),$ for $j=2,3,\dots,2\bold{N}-1$.
We can also write 
    \begin{align*}
\Lambda_{\bold{N}}^{(d)}f=\sum_{j=1}^{2\bold{N}-1}\sum_{k=1}^{j}f_k\Lambda_{j-k+1}\phi_{j,\bold{N}}, \quad \text{for} \quad f = \sum_{j=1}^{2\bold{N}-1} f_j \phi_{j,\bold{N}}.
    \end{align*}
In the computational examples, $u^{\phi_{1,\bold{N}}}$ is solved using a regular mesh with $2^{13}$ spatial and $2^{15}$ temporal cells.

\subsection{Implementation of the energy focusing}

Computational implementation of the energy focusing method
boils down to solving discretized versions of the linear equations 
(\ref{eq1}) and (\ref{eq2}).

Most of the operators $X$ appearing in (\ref{eq1}) and (\ref{eq2}) are simply discretized by setting $X^{(d)} \phi_{j,\bold{N}} = P^\bold{N} X \phi_{j,\bold{N}}$. This is the case for $R$ and $J$, see the definition (\ref{K-operator}) of $K$, as well as, for $N$ and $Q$ in (\ref{eq2}).

In the $1+1$-dimensional case, the projection $P$, appearing in (\ref{eq1}) and (\ref{eq2}), is equal to the multiplication with the characteristic function of the interval $(T-r,T)$ for some $r$,
that is, $$P=P_r:L^2(0,T)\to L^2(0,T),\quad (P_rf)(t)=\mathds{1}_{(T-r, r)}(t)f(t).$$
We discretize $P$ by setting
    \begin{align*}
P^{(d)} \phi_{j,\bold{N}}= \begin{cases}
\phi_{j,\bold{N}}, & T-r < jh < T,
\\
0, & \text{otherwise}.
\end{cases}
    \end{align*}
Then $P^{(d)} : \mathcal P^{\bold{N}} \to \mathcal P^{\bold{N}}$.
The projection $\hat P$ is discretized analogously, see the definition (\ref{L-oper}) of $L$.
The time derivative is discretized using first order forward finite differences at the points $nh$, $n=1,\dots,2\bold{N}-2$, as follows
    \begin{align*}
\p_t^{(d)} f = \sum_{j=1}^{2\bold{N}-2} \frac{f_{j+1}-f_j} h \phi_{j,\bold{N}}(t), 
\quad \text{for} \quad f = \sum_{j=1}^{2\bold{N}-1} f_j \phi_{j,\bold{N}}.
    \end{align*}

We have now given discretizations of all the operators appearing in (\ref{eq1}) and (\ref{eq2}).
The function $\Phi_T$ on the right-hand side of (\ref{eq1})
is discretized by $\Phi_T^{(d)} = P^\bold{N} \Phi_T$.
Solving for $h$ in (\ref{eq1}), with the operators replaced by their discretizations, gives us $h_\alpha^{(d)} \in \mathcal P^\bold{N}$.
Then we solve for $a$ in (\ref{eq2}), with the operators replaced again by their discretizations, and with $h_\alpha$ replaced by $h_\alpha^{(d)}$. We denote the so obtained solution by $a^{(d)} \in \mathcal P^\bold{N}$.

We use the restarted generalized minimal residual (GMRES) method to solve the discrete versions of (\ref{eq1}) and (\ref{eq2}). 
The maximum number of outer iterations is 6 and the number of inner iterations (restarts) is 10. We use zero as the initial guess, and the tolerance of the method is set to $10^{-12}$.

\begin{figure}[H]
\vspace{-5mm}

    \centering
    \includegraphics[scale=0.13]{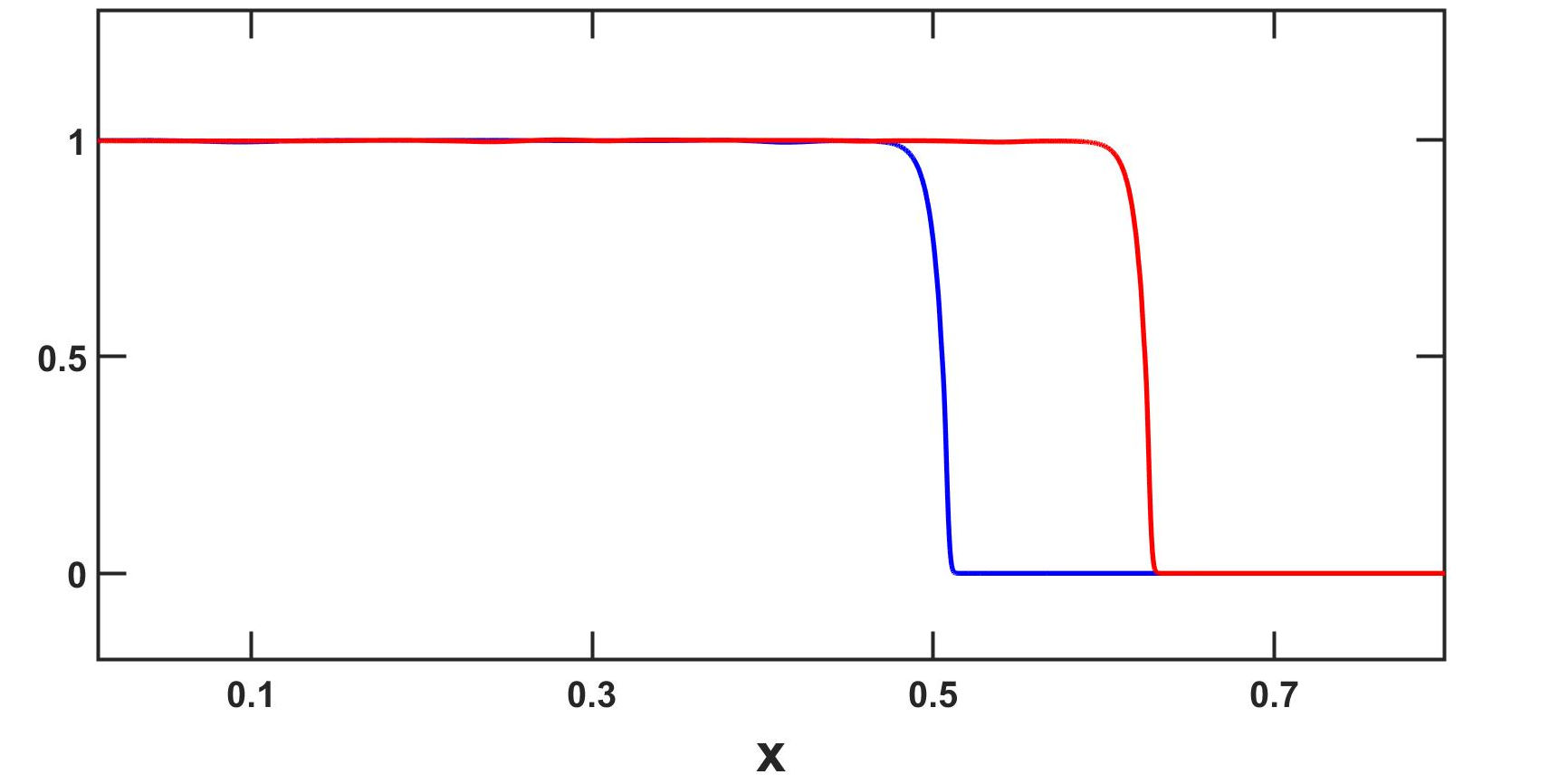}
    
    \vspace{-5mm}
    \caption{
Functions
$u^{P_{r_1}  h_1} (x, T ) \approx 1_{M (ˆ'r_1)}(x)$ (blue) and $u^{P_{r_2}
h_2} (x, T ) \approx 1_{M (r_2)}(x)$ (red),
where $h_1$ and $h_2$ are obtained by solving the discretized version
of (\ref{eq1}).}
\label{fig_h_sol}
\end{figure}

\subsection{Computational examples}

We set $r_1 = \frac 12 $, $r_2 = \frac 58$ and $\bold{N} = 2^{11}$, and denote by $h_{\alpha,j}^{(d)}$ the solution of the discretized version of (\ref{eq1}) with $r=r_j$, $j=1,2$. 
The solutions $u^{P_{r_1}
h_{\alpha, 1}^{(d)}} (x, T )$ and $u^{P_{r_2}  h_{\alpha, 2}^{(d)}} (x, T )$ with $\alpha = 0.001$ are shown in Figure~\ref{fig_h_sol}.
Moreover, we denote by $a_{j}^{(d)}$ the solution of the discretized version of (\ref{eq2}) with $h_\alpha = h_{\alpha,j}^{(d)}$.
The difference of the corresponding solutions 
    \begin{align*}
u^{a_{2}^{(d)}}(x,T)-u^{a_{1}^{(d)}}(x,T) =
u^{a_{2}^{(d)}-a_{1}^{(d)}}(x,T),
    \end{align*}
with $\beta=1.02\,\cdotp 10^{-4}$ and $\alpha$ as above, is shown in Figure \ref{fig:sikapossu1212}.
The spurious oscillations near the origin in Figure \ref{fig:sikapossu1212} were present also in our computations using finer discretizations, however, they appear to converge to zero in $L^2(M)$
as predicted by Theorem \ref{Thm 1}.
Convergence of the error
    \begin{align}\label{error}
\|u^{a_{2}^{(d)}-a_{1}^{(d)}}_t(\cdot,T) - \mathds{1}_{M(r_2)\setminus M(r_1)}\|_{L^2(M)}
    \end{align}
is shown in Figure \ref{fig:sikapossu12} (Right)  as a function of $\bold{N}$. 
Different regularization parameters $\alpha$ and $\beta$ are chosen for each $\bold{N}$.


\begin{figure}[H]
\vspace{-3mm}

    \centering
    \includegraphics[scale=0.18]{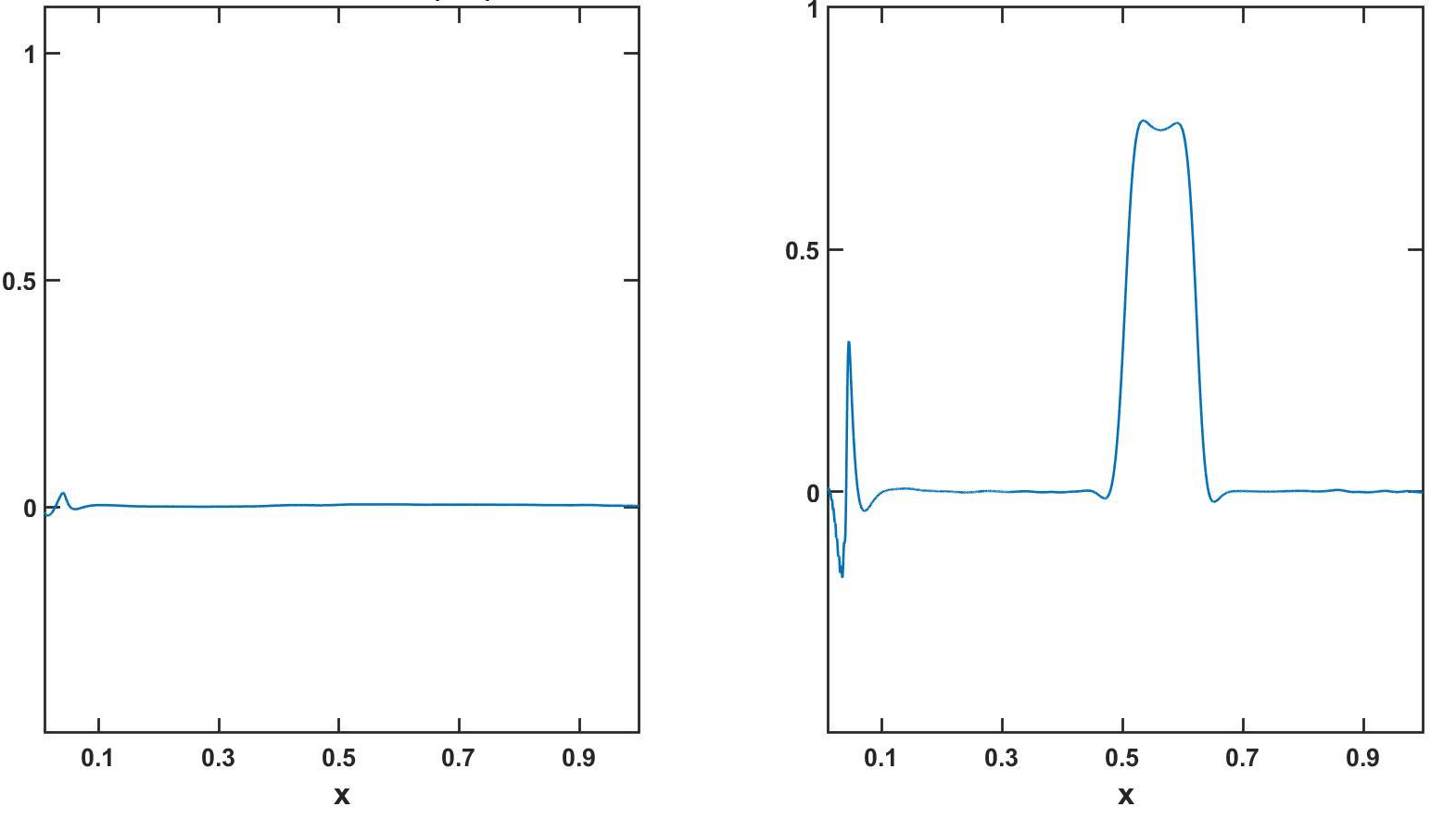}
    
    \vspace{-5mm}
    \caption{
Functions
$u^{a} (x, T )$ (left) and $\p_t u^{a} (x, T )$ (right)
where $a=a_{2}^{(d)}-a_{1}^{(d)}$ and
 $a_1^{(d)}$ and $a_2^{(d)}$ are the solutions to the discretized version of the
minimization problem (\ref{eq2}).
 The time derivative of the wave at time $T$, that is,
$x\mapsto u^{a}_t(x,T)$, where $a=a_{2}^{(d)}-a_{1}^{(d)}$, is concentrated near the 
interval $[{\bf x}(r_1),{\bf x}(r_2)]=\hbox{cl}({M(r_2)\setminus M(r_1)})$,
where ${\bf x}(r_1)\approx 0.5$ and ${\bf x}(r_2)\approx 0.62.$
The ``spike'' in the time derivative on right close to
the value x = 0.05 has a relatively small $L^2$-norm despite its
visual appearance.
}
    \label{fig:sikapossu1212}
    \end{figure}
\section{Observation times  and boundary distance functions}\label{sectionDDF}
%

In this section we will 
apply focusing of waves 
to  inverse problems, that is, to determine the coefficients of the operator $\A$ that correspond to the unknown material functions in $M$.  Results in \cite{B1,BK,KKL}  show that  
the mapping $\Lambda$ determines uniquely the
isometry type of the Riemannian manifold $(M,g)$. Here we consider an alternative proof for these results. We show that $\Lambda$ determines the time when the wave emanating from a point source in the domain $M$  is observed at different points of the boundary $\p M$. We do this by
considering waves that focus at a point $\hat x$. As shown in formula \eqref{eq: A0 Green},
the waves focusing at time $T$  to the point $\hat x$ converge to Green's function
$G(x,t,\hat x,T)$ at times $t>T$. Below we show that by considering the boundary values
of the focusing waves we can determine the observation times from  point sources
located at all points $\hat x\in M$. These functions determine the metric $g$  in $M$ up to an isometry,
see \cite{KKL}. A similar approach has been used for non-linear wave equation, e.g. $\square_gu+au^2=0$,
where the non-linear interaction of the waves is used to produce artificial microlocal points sources in $M\times \R$.
Such  artificial microlocal points sources determine the information analogous 
to the observation times from point sources in the medium, see 
\cite{deHoop_et_al,Hintz,KLUinv,Lassas-ICM}. We note that for genuinely non-linear equations this technique makes it possible 
to solve inverse problems for non-linear equations that are still unsolved for linear equations, e.g. for equations with a time-varying metric.
Below, we will show that some of these techniques are applicable also for linear wave equations.

%

Consider a manifold $(M,\texttt{g})$ and Green's function $G(\cdot, \cdot\,; \hat{x}, T)$ satisfying \eqref{GreensWaveEq}.
For $\hat x\in M$,  $T\in \R$, and $y\in \p M$ we define the observation time function corresponding
to a point source at $(\hat x,T)\in M\times \R$,
\beq\label{def of cal T}
\mathcal T_{\hat x,T}(y)&=&\sup \{t\in \R;\hbox{ the set $\{y\}\times (-\infty,t)$ has a neighborhood}\\
& &\quad \quad \hbox{ $U\subset \p M\times \R$ such that $ G(\cdot,\cdot,\hat x,T)\big|_U =0$}\}. \nonumber
\eeq
In other words, $\mathcal T_{\hat x,T}(y)$ is the first time when the wave $G(\cdot,\cdot,\hat x,T)$  is observed on the boundary at the point $y$.

\begin{proposition} \label{prop: 1}
(i) For all $z\in {\p M}$, the pair $(\p M,g|_{\p M})$  and  the map $\Lambda$ determines function $\tau_{\p M}(z)$.

(ii) For all $z\in {\p M}$  and ${{\widehat t}}<\tau_{\p M}(z)$  the pair $(\p M,g|_{\p M})$  and the map $\Lambda$ determines  $\mathcal T_{x,T}(z)$ for  the point  $x=\gamma_{ z,\nu}({{\widehat t}})\in M$.

(iii) We have $\mathcal T_{\hat x,T}(y)=  \texttt{d}_M(y,\hat x)+T$. %
%
\end{proposition}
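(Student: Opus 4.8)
The plan is to handle the three parts in the order (iii), (i), (ii): the geometric identity (iii) is what lets us interpret boundary data as travel times, and (i) supplies the admissible range of $\hat t$ used in (ii). For part (iii) I would prove the two inequalities separately. The bound $\mathcal T_{\hat x,T}(y)\ge \texttt{d}_M(y,\hat x)+T$ follows from finite speed of propagation: the forward fundamental solution $G(\cdot,\cdot;\hat x,T)$ is supported in $\{(x,t):t\ge T,\ \texttt{d}_M(x,\hat x)\le t-T\}$, so for every $t<\texttt{d}_M(y,\hat x)+T$ the point $(y,t)$ has a neighborhood in $\p M\times\R$ on which $G$ vanishes, forcing the supremum in \eqref{def of cal T} to be at least $\texttt{d}_M(y,\hat x)+T$. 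For the reverse inequality I would invoke propagation of singularities: the point source at $(\hat x,T)$ produces a singularity of $G$ that travels along the geodesics issuing from $\hat x$, and the minimizing geodesic realizing $\texttt{d}_M(y,\hat x)$ reaches $y$ transversally at time $\texttt{d}_M(y,\hat x)+T$; hence the trace of $G$ on $\p M\times\R$ is singular, and in particular nonzero, at $(y,\texttt{d}_M(y,\hat x)+T)$, so $G$ cannot vanish on a neighborhood of $\{y\}\times(-\infty,t)$ for any $t>\texttt{d}_M(y,\hat x)+T$, giving $\mathcal T_{\hat x,T}(y)\le \texttt{d}_M(y,\hat x)+T$.

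For part (i), the key point is that $(\p M,g|_{\p M})$ and $\Lambda$ determine, via the Blagovestchenskii identity \eqref{blago1} together with the approximate controllability of Proposition \ref{lemma_Control_2}, the Riemannian volume of domains of influence, and more precisely $\vol\big(M(\Gamma,t)\setminus M(\p M,t-\e)\big)$ for open $\Gamma\subset\p M$ and $t,\e>0$; this is exactly the computation of $\vol(\Omega_k)$ carried out in the proof of Theorem \ref{Thm 1}. I would then characterize the critical value geometrically: if $t<\tau_{\p M}(z)$, then for every open $\Gamma\ni z$ and every $\e>0$ the set $M(\Gamma,t)\setminus M(\p M,t-\e)$ contains a shell of positive volume around the tip $\gamma_{z,\nu}(t)$, since there the normal geodesic is the unique shortest curve to $\p M$ and the nearest boundary point lies in $\Gamma$; whereas if $t>\tau_{\p M}(z)$, then $\texttt{d}_M(\gamma_{z,\nu}(t),\p M)<t$ and one may choose $\Gamma$ and $\e$ so small that this set is empty. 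Consequently
\[
\tau_{\p M}(z)=\sup\{t>0:\ \vol\big(M(\Gamma,t)\setminus M(\p M,t-\e)\big)>0\ \text{for all open }\Gamma\ni z\text{ and all }\e>0\},
\]
whose right-hand side is computable from $\Lambda$ and $(\p M,g|_{\p M})$.

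For part (ii), I would fix $z$ and $\hat t<\tau_{\p M}(z)$ (admissible by part (i)) and set $\hat x=\gamma_{z,\nu}(\hat t)$. By Theorem \ref{Thm 1} the focusing boundary sources $f_n(\alpha,\beta,k)$ are determined by $(\p M,g|_{\p M})$ and $\Lambda$, hence so are their responses $\Lambda f_n(\alpha,\beta,k)$. By the boundary-value limit \eqref{eq: A0B}, these responses converge in $\big(H^s_0(\p M\times(T,T_1))\big)'$ to the trace $G(\cdot,\cdot;\hat x,T)|_{\p M\times(T,T_1)}$, so $\Lambda$ determines this trace. Reading off from the determined trace the first time the wave is observed at the boundary — i.e.\ evaluating \eqref{def of cal T} on it — yields $\mathcal T_{\hat x,T}(z)$ (and in fact $\mathcal T_{\hat x,T}(y)$ for every $y$ in the observation window), which by part (iii) equals $\texttt{d}_M(z,\hat x)+T=\hat t+T$.

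The main obstacle is the reverse inequality in part (iii): showing that the wave genuinely arrives at $y$ at the exact time $\texttt{d}_M(y,\hat x)+T$ and not merely no earlier. This requires tracking the leading singularity of $G$ along the minimizing geodesic up to and across the Neumann boundary — in particular that it survives reflection and that the minimizing geodesic meets $\p M$ transversally, so that the boundary trace is genuinely singular. A secondary subtlety arises in part (i): the volumes $\vol\big(M(\Gamma,t)\setminus M(\p M,t-\e)\big)$ tend to zero as $\Gamma$ and $\e$ shrink in \emph{both} regimes, so the characterization must be phrased as ``positive for all small $\Gamma,\e$'' versus ``exactly zero for some small $\Gamma,\e$'' rather than through a limit.
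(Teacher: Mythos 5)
Your parts (i) and (ii) are sound. Part (ii) follows the paper's own route: the boundary trace of $G(\cdot,\cdot;\hat x,T)$ is determined from $\Lambda$ via the limit \eqref{eq: A0B}, and $\mathcal T_{\hat x,T}$ is then read off from the definition \eqref{def of cal T}. For part (i) you take a genuinely different and workable route: you characterize $\tau_{\p M}(z)$ through positivity of the volumes $\vol\big(M(\Gamma,t)\setminus M(\p M,t-\e)\big)$, which are computable from $\Lambda$ by the Blagovestchenskii identities; this is the classical boundary-control characterization and has the advantage of not depending on part (iii) at all. The paper instead detects $\tau_{\p M}(z)$ by whether the limit \eqref{eq: A0B} vanishes or not, which does lean on (iii) to know that the Green's function trace is nonzero. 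Your version is arguably more self-contained, provided you spell out (as you indicate) that for $t<\tau_{\p M}(z)$ the difference set contains an open neighborhood of some $\gamma_{z,\nu}(s)$ with $t-\e<s<t$, and that for $t>\tau_{\p M}(z)$ it is empty once $\Gamma$ and $\e$ are small enough.

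The genuine gap is the reverse inequality in (iii), and it is exactly the obstacle you flag without resolving. Your argument needs the minimizer realizing $\texttt{d}_M(y,\hat x)$ to be an interior geodesic meeting $\p M$ transversally, along which the leading singularity of $G$ provably survives up to the boundary trace. On a manifold with boundary none of this is guaranteed: the minimizing path may reflect off $\p M$ several times, may graze the boundary tangentially, or may contain gliding segments inside $\p M$; in those regimes propagation of singularities (Melrose--Sj\"ostrand) only places the wavefront set \emph{inside} the flowout of generalized bicharacteristics and does not deliver the nonvanishing of the boundary trace at the precise arrival time that you need. The paper avoids this entirely: assuming $G$ vanishes near $\{y\}\times(-\infty,t_1)$ on the boundary for some $t_1>\texttt{d}_M(y,\hat x)+T$, it mollifies in time, applies Tataru's unique continuation theorem \cite{Ta1} to propagate the vanishing of the Cauchy data into the set $\{(x,t):\ t<t_1-\texttt{d}_M(x,y)\}$, which is then an open neighborhood of $(\hat x,T)$; hence $G$ restricted there is a distribution supported at the single point $(\hat x,T)$, i.e.\ a finite sum of derivatives of $\delta_{(\hat x,T)}$, and no such distribution can satisfy $(\p_t^2-\A)F=\delta_{(\hat x,T)}$ (seen by taking the Fourier transform in local coordinates). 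That contradiction closes the argument with no geometric assumptions on the minimizer. You should replace your propagation-of-singularities step with this unique-continuation argument, or else restrict to geometries where the minimizer is a single transversal ray.
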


\begin{proof}  
Let us first prove (iii), and then (i) and (ii).

(iii) 
Using the finite velocity of the wave propagation for the wave equation, see  \cite{HormanderIV}, we obtain that the support of Green's function
$G(\cdot,\cdot,\hat x,T)$ is contained $J^+(q)\cap (\p M\times \R)$, where  $J^+(q)$ is in the causal future the point $q=(\hat x,T)\in M\times \R$, given by
\ba
J^+(q)=\{(y',s)\in M\times \R;\ s\ge  \texttt{d}_M(y',\hat x)+T\}.
\ea
This implies that $G(\cdot,\cdot,\hat x,T)=0$ in $J^+(q)$ and that $\mathcal T_{\hat x,T}(y)\geq  \texttt{d}_M(y,\hat x)+T$.
Next, we consider the opposite inequality. To this end, assume that there is $t_1>\texttt{d}_M(y,\hat x)+T$ such that $t_1< \mathcal T_{\hat x,T}(y)$. Then,  $G(\cdot,\cdot;\hat x,T)$ vanishes in an open set $U_1\subset \p M\times \R$ that contains  $\{y\}\times (-\infty,t_1)$. As $\partial_\nu G(\cdot, \cdot; \hat x,T)|_{\partial M\times \R}=0$, we then have that
the Cauchy data of $ G(\cdot, \cdot; \hat x,T)$ vanishes in the set $U_1$. Let $\psi_\varepsilon\in C^\infty(\R)$ be a function such that $\int_\R \psi_\varepsilon(t)dt=1$ and $\supp(\psi_\varepsilon)\subset (-\varepsilon,\varepsilon)$.
By the above, the function 
$$
 G^\varepsilon(x,t; \hat x,T)=\int_\R  G(x,t-t'; \hat x,T) \psi_\varepsilon(t') dt'
$$ is a $C^\infty$-smooth function satisfies the homogeneous wave equation
\begin{eqnarray}\label{GreensWaveEq 2}
  \left(\partial^2_t-\mathcal A\right) G^\varepsilon(\cdot, \cdot;\hat x,T)=0,\quad \hbox{on }(M\times \R)\setminus I_\varepsilon,\\
  G^\varepsilon(\cdot, \cdot; \hat x,T)|_{U^\varepsilon_1}=0;\,\,\partial_\nu G^\varepsilon(\cdot, \cdot; \hat x,T)|_{U^\varepsilon_1}=0\nonumber
\end{eqnarray}
where $U^\varepsilon_1\subset \partial M\times \R$ is a neigbhorhood of $ \{y\}\times (-\infty,t_1-\varepsilon)$ and $I_\varepsilon=\{\hat x\}\times 
 (T-\varepsilon,T+\varepsilon).$ Using Tataru's unique continuation theorem \cite{Ta1} 
 in the domain $M\times \R$ we see that 
$$
G^\varepsilon(x,t; \hat x,T)=0\quad\hbox{ for }(x,t)\in \{ (M\times \R)\setminus \{\hat x\}\times I_\varepsilon):\ t<t_1-\texttt{d}_M(x,y)-\varepsilon\}.$$
As $G^\varepsilon(x,t;\hat x,T)\to G(x,t; \hat x,T)$ in  the domain $(M\setminus \{x_0\})\times \R$ in sense  of distributions as $\varepsilon\to 0$, we see that  
$$
\hbox{$G(x,t; \hat x,T)=0$ for $(x,t)\in \mathcal V\setminus \{(\hat x,T)\}$},
$$
where $$\mathcal V=\{(x,t)\in M\times \R:\ t<t_1- \texttt{d}_M(x,y)\}.$$
Since $\mathcal V$ is an open neighborhood of the point $(\hat x,T)$, we see that
$G(\cdot, \cdot; \hat x,T)|_{\mathcal V}$ is a distribution supported in a single point $(\hat x,T)$.
By \cite{Rudin}, this implies that $F=G(\cdot, \cdot; \hat x,T)|_{\mathcal V}$ is finite sum of derivatives
of the delta distribution supported at $(\hat x,T)$.
 Considering such a distribution $F$ in local coordinates and computing its Fourier transform,
we see that $(\p_t^2-\mathcal A)F$ can not be the delta-distribution $\delta_{(\hat x,T)}(x,t)$. 
This is in contradiction with 
the equation (\ref{GreensWaveEq}), and hence we conclude that the claimed $t_1\in (\texttt{d}_M(y,\hat x)+T,\mathcal T_{\hat x,T}(y))$
can not exists. Thus $\mathcal T_{\hat x,T}(y)=  \texttt{d}_M(y,\hat x)+T$. This proves (iii).

(i) The map $\Lambda$ determines the functions $f_n{{(\alpha,\beta}},k)$. If
${{\widehat t}}>\tau_{\p M}(z)$, the limit (\ref{eq: A0B}) is zero. If
${{\widehat t}}<\tau_{\p M}(z)$, the considerations in the proof of claim (ii) show that the limit (\ref{eq: A0B}) is non-zero.
Thus $\Lambda$ determines $\tau_{\p M}(z)$.

(ii) The claim follows from the definition \eqref{def of cal T} of $\mathcal T_{\hat x,{{\widehat t}}}(y)$.

%
\end{proof}

%
%
%
%


By \eqref{prop: 1}  the pair $(\p M,g|_{\p M})$  and map $\Lambda$  determine the  function  $\tau_{\p M}(z)$ for all $z\in {\p M}$. Those  determine  also $\mathcal T_{x,T }(y)$ and $\texttt d_M(\hat x,y)$, $y\in\p M$ for the point  $x=\gamma_{ z,\nu}({{\widehat t}})\in M$
where  ${{\widehat t}}<\tau_{\p M}(z)$. As the distance function is continuous, we see that when ${{\widehat t}}\to t_1=\tau_{\p M}(z)$, we have that $\texttt d_M(\gamma_{ z,\nu}({{\widehat t}}),y)\to \texttt d_M(\gamma_{ z,\nu}(t_1),y)$. Thus   the pair $(\p M,g|_{\p M})$  and the map 
$\Lambda$ determine $\texttt d_M(x_0,y)$ for the point  $x=\gamma_{ z,\nu}({{\widehat t}})\in M$
for all  ${{\widehat t}}\le \tau_{\p M}(z)$ and $y\in \p M$. 
This implies that  the pair $(\p M,g|_{\p M})$  and $\Lambda$  determine the collection of boundary distance functions, that is, the set 
$$R(M)=\{\texttt d_M(\hat x,\,\cdotp)\in C(\p M):\ \hat x\in M\}.$$ Further, the set  $R(M)$ determines the isometry type of $(M,\texttt g)$, see \cite{KKL} (see also generalizations of this result in
 \cite{LaSak} (see also \cite{Ivanov2018}). Moreover, in the case when $ M\subset \R^n$ and $\A=-c(x)^2\Delta$ we can determine the Euclidean coordinates
 of the point $\hat x=\gamma_{z,\nu}(\hat t)$ using Cor. \ref{cor: focusing coordinates}. Hence we can determine
  vector $\hat v=\lim_{t\to \hat t-}\p_t\gamma_{z,\nu}(t)$ and $c(\hat x)=1/\|\hat v\|_{\R^n}$. This gives an algorithm to determine the unknown wave speed $c(x)$ at all points 
  $x\in M$.
 \medskip

\noindent {\bf Acknowledgements:}
The research has been partially supported by 
EPSRC EP/D065711/1, and 
 Academy of Finland, grants 273979, 284715, 312110.

\appendix

\section{}

\label{appA}
In this appendix, we show that the Blagovestchenskii identities (\ref{blago1}),(\ref{Tformula2}), and the energy identity (\ref{Tformula11}) hold. 

\subsection{Proof of the Blagovestchenskii identity 1 (\ref{blago1})}
\label{appA1}
We have following version of the \emph{Blagovestchenskii identity}
\begin{align*}
\int_{M} u^f(T)u^h(T)\,\mathrm{dV} =\int_{{\p M}\times [0,2T]}
(Kf)(x,t) h(x,t)\, dS(x) dt, 
\end{align*}
where $f,h\in V$.
The proof given here is in a slightly different context that the one done e.g.\ in \cite{KKL}.
\begin {proof} For boundary value problem 
\begin{align}
\label{eq: Wave12}
\begin{cases}
  \p_t^2u^f(x,t)-\mathcal A u^f(x,t)=0,\quad \hbox{ in } M\times \R_+,\\
  u^f|_{t=0}=0,\quad u^f_t|_{t=0}=0,  \\
  \p_\nu u^f|_{\p M \times \R_+}=f, 
\end{cases}
\end{align}
let us assume that we have solutions $u^f$ and $u^h$ with respect to boundary sources $f,h\in V)$.
Let us define
\beq
\nonumber
w(t,s)=\int_{M} u^f(t) {u^h(s)}\,\mathrm{dV}_\mu.
\eeq
Integrating by parts, we see that
\beq
\label{right hand side of wave eq}& &
(\p^2_t-\p^2_s)w(t,s)
\hspace{-5em}\\
&=& \nonumber
-\int_{M} \big[\A u^f(t) {u^h(s)}- u^f(t)
{\A u^h(s)}\big]\,\mathrm{dV}_\mu(x)
\\ \nonumber
& &-\int_{\p M} \big[\p_\nu u^f(t)u^h(s)- u^f(t)
{\p_\nu u^h(s)}\big]\,dS_{\texttt g}
\\ \nonumber
&=&\int_{\p M} \big[(-\p_\nu u^f(t)+\eta u^f(t)) u^h(s)- u^f(t)
(-\p_\nu u^h(s)+\eta u^h(s))\big]\,dS_{\texttt g}
\\ \nonumber
&=& \int_{\p M} \big[f(t) \HL h(s)-
\HL f(t)
{h(s)}\big]\,dS_{\texttt g}.
\eeq
Moreover, as
\bfo
\left. w \right| _{t=0}=\left. w\right| _{s=0}=0, \quad
\left. \p _t w\right| _{t=0}= \left. \p_s w\right| _{s=0}=0,
\efo
we can consider (\ref{right hand side of wave eq}) as one dimensional wave equation with known right hand side and vanishing initial and boundary data. Solving this initial-boundary value problem,  we obtain 
\beq
\int_M u^f(x,T) {u^h(x,T)}\,\mathrm{dV}_\mu(x)=
\label{4.60}
\eeq
\bfo
\int_{[0,2T]^2}  \int_{\p M} J(t,s) \big[f(t)  {(\Lambda h)(s)}-
(\Lambda f)(t)  {h(s)}\big]\,dS_{\texttt g}(x)dtds,
\efo
where $J$ is as defined in (\ref{J-operator}).
\smallskip

The Schwartz kernel of $\Lambda$ is the Dirichlet boundary value of the  Green's function $G(x,x',t-t')$ satisfying
\begin{align}
\label{eq: Wave green}
( \p_t^2+\A)G_{x',t'}(x,t) &= \delta_{x'}(x)\delta(t-t')\quad \hbox{ in
} M\times \R_+,\\
G_{x',t'}|_{t=0} = 0, &\quad \p_tG_{x',t'}|_{t=0}=0,  \quad
B_{\nu,\eta} G_{x',t'}|_{\p M\times \R_+} = 0, \nonumber
\end{align}
where $G_{x',t'}(x,t)=G(x,x',t-t')$.
As
\ba
G(x,x',t-t')=G(x',x,t-t'),
\ea
we see that
$
\Lambda^*=R \Lambda R
$
where $Rf(x,t)=f(x,2T-t)$ is the time reversal map.

Thus,  we can rewrite  formula (\ref{4.60}) in the form
\beq\label{kaava A}\quad\quad
\int_M u^f(x, T) {u^h(x, T)}\,\mathrm{dV}_\mu(x)
=\int_{\p M\times [0,2T]} (Kf)(x,t)\,h(x,t)\,dS_{\texttt g}(x)dt\hspace{-1cm}
\eeq 
where $K$ is defined in \pef{K-operator}.

Analyzing (\ref{K-operator}), we see that the inner product in the
{\newtextt left}-hand side of (\ref{kaava A}) can be found by making two measurements, one with the input $f$ and the other with the input  $RJf,$
obtained from $f$ by  basic operations of the time reversal $R$ and the
time filtering $J$.
\end {proof}
\subsection{Proof of the Blagovestchenskii identity 2 (\ref{Tformula2})}
\label{appA2}
We have following version of the \emph{Blagovestchenskii identity (\ref{Tformula2})}
\begin{align*}
 \bra u^h(T),1 \cet_{L^2(M)}=-\bra h,\Phi _T \cet_V,
\end{align*}
where $h\in V$ and $\Phi _T$ is as defined in (\ref{ihk}).
\begin {proof}Let us assume that we have solution $u^h$ for problem (\ref{eq: Wave12}) with respect to boundary source $h\in V.$
Let us define
\ba
I(t)&=&\int_{M}\left[u^h(x,t)\right]\,\mathrm{dV}_\texttt{g}(x).
\ea
Differentiation respect the time gives us
\ba
\p_t^2I(t)&=&\int_{M}\left[\p_t^2u^h(x,t)\cdot 1\right]\,\mathrm{dV}_\texttt{g}(x).
\ea
Using the definition of problem (\ref{eq: Wave12}) we have
\ba
\p_t^2I(t)&=&\int_{M}\left[\mathcal A u^h(x,t)\cdot 1\right]\,\mathrm{dV}_{\texttt g}(x).
\ea
Integrating by parts, we see that
\ba
\p_t^2I(t)&=&
\int_M \big[u^h(x,t)\mathcal A 1\big]\,\mathrm{dV}_\mu(x)
-\int_{\p M} \big[\p_\nu u^h(x,t)\cdot 1-u^h(x,t)\p_\nu 1\big]\,dS_{\texttt g}(x)
\ea
Thus
\ba
\p_t^2I(t)&=&-\int_{\p M} \big[h(x,t)\big]\,dS_{\texttt g}(x)
\ea
At time $t=0$ we assumed that initial values $\p_t u(x,0)=0$ and $u(x,0)=0$. Thus we have $I(0)=0$ and $\p_tI(0)=0$. Using this we get 
\ba
I(t)&=&-\int_0^t\int_0^{t'}\int_{\p M} h(x,t'')\,dS_{\texttt g}(x)dt''dt'.
\ea
Let us define
\ba
J_t&=&\big\{(t',t'')|0\le t''\le t'\le t\big \}.
\ea
Using the indicator function we get
\ba
I(t)&=&-\int_0^t\int_0^{t}\int_{\p M} h(x,t'')\mathds{1}_{J_t}(t',t'')\,dS_{\texttt g}(x)dt''dt'.
  \ea
Then we chance the order of integration gives us
\ba
I(t)&=&-\int_{\p M} \int_0^t h(x,t'') \big[\int_0^{t} \mathds{1}_{J_t}(t',t'')dt'\big]dt''\,dS_{\texttt g}(x)\\
&=&-\int_{\p M} \int_0^t h(x,t'') \big[\int_{t''}^{t}1 dt'\big]dt''\,dS_{\texttt g}(x)\\
&=&-\int_{\p M} \int_0^t h(x,t'')\big[t-t''\big]dt''\,dS_{\texttt g}(x).
\ea
For $t=T$ we have
\ba
I(T)&=&-\int_{\p \M} \int_0^T h(x,t'')\big[T-t''\big]dt''\,dS_{\texttt g}(x).
\ea
Thus 
\ba
I(T)&=&-\int_{\p \M} \int_0^{2T} h(x,t'')\hat P\big[T-t''\big]dt''\,dS_{\texttt g}(x)=-\bra h,\Phi _T\cet_V.
\ea
\end{proof}

\bibliographystyle{abbrv}
\bibliography{references}

\end{document}